\title{Linear relations between face numbers of 
levels in arrangements}
\titlerunning{Linear relations between face numbers of levels in arrangements} 
\author{Elizaveta Streltsova}{ISTA, Klosterneuburg, Austria}{estrelts@ist.ac.at
}{}{}
\author{Uli Wagner}{ISTA, Klosterneuburg, Austria \and \url{https://ist.ac.at/en/research/wagner-group/}}{uli@ist.ac.at}{https://orcid.org/0000-0002-1494-0568}{}
\authorrunning{E. Streltsova and U. Wagner} 
\keywords{Levels in arrangements, $k$-sets, $k$-facets, convex polytopes, $f$-vector, $h$-vector, $g$-vector, Dehn--Sommerville relations, Radon partitions, Gale duality, $g$-matrix} 
\newcounter{sideremark}
\newcommand{\arr}{\mathcal{A}}
\DeclareMathOperator{\sgn}{sgn}
\DeclareMathOperator{\conv}{conv}
\DeclareMathOperator{\aff}{aff}
\DeclareMathOperator{\lin}{lin}
\newcommand{\fpoly}{f}
\newcommand{\fspace}{\mathfrak{F}}
\newcommand{\gspace}{\mathfrak{G}}
\newcommand{\fhom}{p}
\renewcommand{\leq}{\leqslant}
\renewcommand{\geq}{\geqslant}
\renewcommand{\phi}{\varphi}
\newcommand{\R}{\mathbf{R}}
\newcommand{\Z}{\mathbf{Z}}
\DeclarePairedDelimiter\floor{\lfloor}{\rfloor}
\newcommand{\cyclic}{V_{\textup{cyclic}}}
\newcommand{\cocyclic}{V_{\textup{cocyclic}}}
\begin{document}

\maketitle
\thispagestyle{empty}

\begin{abstract}
We study linear relations between face numbers of levels in arrangements. Let $V = \{ v_1, \ldots, v_n \} \subset \mathbf{R}^{r}$ be a vector configuration in general position, and let $\mathcal{A}(V)$ be polar dual arrangement of hemispheres in the $d$-dimensional unit sphere $S^d$, where $d=r-1$. For $0\leq s \leq d$ and $0 \leq t \leq n$, 
let $f_{s,t}(V)$ denote the number of faces of \emph{level} $t$ and dimension $d-s$ in the arrangement $\mathcal{A}(V)$ (these correspond to partitions $V=V_-\sqcup V_0 \sqcup V_+$ by linear hyperplanes with $|V_0|=s$ and $|V_-|=t$).
We call the matrix $f(V):=[f_{s,t}(V)]$ the \emph{$f$-matrix} of $V$. 

Completing a long line of research on linear relations between face numbers of levels in arrangements, we determine, for every $n\geq r \geq 1$, the affine space $\fspace_{n,r}$ spanned by the $f$-matrices of configurations of $n$ vectors in general position in $\R^r$; moreover, we determine the subspace $\fspace^0_{n,r} \subset \fspace_{n,r}$ spanned by all \emph{pointed} vector configurations (i.e., such that $V$ is contained in some open linear halfspace), which correspond to point sets in $\R^d$. This generalizes the classical fact that the \emph{Dehn--Sommerville relations} generate all linear relations between the face numbers of simple polytopes 
and answers a question posed by Andrzejak and Welzl in 2003.

The key notion for the statements and the proofs of our results is the \emph{$g$-matrix} of a vector configuration, which determines the $f$-matrix and generalizes the classical \emph{$g$-vector} of a polytope.

By Gale duality, we also obtain analogous results for partitions of vector configurations by sign patterns of nontrivial linear dependencies, and for \emph{Radon partitions} of point sets in $\mathbf{R}^d$.
\end{abstract}

\section{Introduction}
\label{s:intro}
Levels in arrangements (and the dual notion of $k$-sets) play a fundamental role in discrete and computational geometry 
and 
are a natural generalization of convex polytopes (which correspond to the $0$-level); we refer to  \cite{Edelsbrunner:1987aa,Matousek:2002aa,Wagner:2008aa} for more background.

It is a classical result in polytope theory that the \emph{Euler--Poincaré relation} is the only linear relation between the face numbers of arbitrary $d$-dimensional convex polytopes, and that the \emph{Dehn--Sommerville relations} 
(which we will review below) 
generate all linear relations between the face numbers of \emph{simple} (or, dually, \emph{simplicial}) polytopes \cite[Chs.~8--9]{Grunbaum:2003aa}. 

Our focus is on understanding, more generally, the linear relations between face numbers of levels in simple arrangements. These have been studied extensively \cite{Mulmuley:1993aa,Gulliksen:1997aa,Linhart:2000aa,Andrzejak:2003aa,Biswas:2024aa}; in particular, 
generalizations of the Dehn--Sommerville relations for face numbers of higher levels were proved, in different forms, by Mulmuley \cite{Mulmuley:1993aa} and by Linhart, Yang, and Philipp \cite{Linhart:2000aa} (see also \cite{Biswas:2024aa}). Here, we determine, in a precise sense, all linear relations, which answers a question posed, e.g., by Andrzejak and Welzl~\cite{Andrzejak:2003aa} in 2003. To state our results formally, it will be convenient to work in the setting of \emph{spherical arrangements} in $S^d$ (which can be seen as a ``compactification'' of arrangements of affine hyperplanes or halfspaces in $\R^d$; this setting is more symmetric and avoids technical issues related to \emph{unbounded} faces). 

\subsection{Levels in Arrangements, Dissection Patterns, and Polytopes}
\label{sec:arrangments}
Throughout this paper, let $r=d+1 \geq 1$, and let $S^d$ be the unit sphere in $\R^r$. We denote the standard inner product in $\R^r$ by $\langle\cdot ,\cdot \rangle$, and write $\sgn(x)\in \{-1,0,+1\}$ for the \emph{sign} of a real number 
$x\in \R$. 
For a sign vector $F\in \{-1,0,+1\}^n$, let $F_+$, $F_0$, and $F_-$ denote the subsets of coordinates $i\in [n]$ such that $F_i=+1$, $F_i=0$, and $F_i=-1$, respectively.

Let $V = \{ v_1, \ldots, v_n \} \subset \R^r$ be a set of $n\geq r$ vectors; fixing the labeling of the vectors, we will also view $V$ as an $(r\times n)$-matrix $V=[v_1|\dots|v_n] \in \R^{r\times n}$ with column vectors $v_i$. Unless stated otherwise, we assume that $V$ is in general position, i.e., that any $r$ of the vectors are linearly independent. We refer to $V$ as a \emph{vector configration} of $\emph{rank}$ $r$. 

We consider the partitions of $V$ by linear hyperplanes, or equivalently, the faces of the dual arrangement. Formally, every vector $v_i \in V$ defines a great $(d-1)$-sphere
$
H_i = \{ x \in S^d \mid \langle v_i, x \rangle = 0 \}
$
in $S^d$ and two open hemispheres
$$
H_i^+ = \{ x \in S^d \mid \langle v_i, x \rangle >0 \}, \qquad H_i^- = \{ x \in S^d \mid \langle v_i, x \rangle < 0 \}.
$$
The resulting \emph{arrangement} $\arr(V)=\{H_1^+,\dots,H_n^+\}$ of hemispheres in $S^d$ determines a decomposition of $S^d$ into \emph{faces} of dimensions $0$ through $d$, where two points $u,u'\in S^d$ lie in the relative interior of the same face iff $\sgn(\langle v_i,u \rangle)=\sgn(\langle v_i,u'\rangle)$ for $1\leq i\leq n$. Let $\mathcal{F}(V)$ be the set of all sign vectors $(\sgn(\langle v_1,u\rangle),\dots,\sgn(\langle v_n,u\rangle) \in \{-1,0,+1\}^n$, where $u$ ranges over all non-zero vectors (equivalently, unit vectors) in $\R^r$. We can identify each face of $\arr(V)$ with its \emph{signature} $F\in \mathcal{F}(V)$; by general position, the face with signature $F$ has dimension $d-|F_0|$ (there are no faces with $|F_0|>d$, i.e., the arrangement is \emph{simple}). Moreover, we call $|F_-|$ the \emph{level} of the face. Equivalently, the elements of $\mathcal{F}(V)$ correspond bijectively to the partitions of $V$ by oriented linear hyperplanes, and we will also call them the \emph{dissection patterns} of $V$. In what follows, we will pass freely back and forth between a vector configuration $V$ and the corresponding arrangement $\mathcal{A}(V)$ and refer to this correspondence as \emph{polar duality} (to distinguish it from \emph{Gale duality}, see below).

\begin{definition}[$f$-matrix and $f$-polynomial]
\label{def:f-poly}
For integers $s$ and $t$, let%
\footnote{By general position, $f_{s,t}(V)=0$ unless $0\leq s\leq d$ and $0\leq t\leq n-s$, but it will occasionally be convenient to allow an unrestricted range of indices.}
$$
f_{s,t} := f_{s,t}(V):=|\{F\in \mathcal{F}(V)\mid |F_0|=s,|F_-|=t\}.
$$
Thus, $f_{s,t}(V)$ counts the $(d-s)$-dimensional faces of level $t$ in $\arr(V)$. 

Together, these numbers form the \emph{$f$-matrix} $f(V)=[f_{s,t}(V)]$. Equivalently, we can encode this data into the bivariate \emph{$f$-polynomial} $\fpoly_V(x,y) \in  \Z[x,y]$ defined by 
$$
f_{V}(x,y) := \sum_{s,t} f_{s,t}(V)\, x^s y^t =\sum_{F\in \mathcal{F}(V)} x^{|F_0|} y^{|F_-|}.
$$
\end{definition}
\begin{observation} 
\label{obs:f-antipodal-symm} By antipodal symmetry $F\leftrightarrow -F$ of $\mathcal{F}(V)$, 
\begin{equation}
\label{eq:f-symm}
f_{s,t}(V)=f_{s,n-s-t}(V) \quad \textrm{for all $s$ and $t$; equivalently,} \textstyle \quad f_V(x,y)= y^n f_V(\frac{x}{y},\frac{1}{y})
\end{equation}
\end{observation}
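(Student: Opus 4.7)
The plan is to exhibit an explicit involution on $\mathcal{F}(V)$ witnessing the claimed symmetry. Concretely, I would use the antipodal map on the sphere: for each $F\in\mathcal{F}(V)$, let $-F\in\{-1,0,+1\}^n$ denote the sign vector obtained by flipping every coordinate. If $F$ is the signature of a nonzero vector $u\in\R^r$, i.e., $F_i=\sgn(\langle v_i,u\rangle)$ for all $i$, then $-F_i=\sgn(\langle v_i,-u\rangle)$, so $-F$ is the signature of $-u$ and in particular $-F\in\mathcal{F}(V)$. Hence $F\mapsto -F$ is a well-defined involution, and thus a bijection, on $\mathcal{F}(V)$.

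Next I would track how the statistics $(|F_0|,|F_-|)$ transform under this involution. Since $\sgn(-x)=-\sgn(x)$ and in particular $\sgn(0)=0$, flipping signs preserves the zero support, $(-F)_0=F_0$, and swaps the positive and negative supports: $(-F)_-=F_+$. Because $F_-,F_0,F_+$ partition $[n]$, we have $|(-F)_-|=|F_+|=n-|F_0|-|F_-|$. Thus the involution maps faces counted by $f_{s,t}(V)$ bijectively to faces counted by $f_{s,n-s-t}(V)$, giving the coefficient identity $f_{s,t}(V)=f_{s,n-s-t}(V)$.

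The polynomial form then follows by a direct substitution:
$$
y^n f_V(x/y,1/y) \;=\; y^n \sum_{s,t} f_{s,t}(V)\,(x/y)^s (1/y)^t \;=\; \sum_{s,t} f_{s,t}(V)\,x^s\, y^{n-s-t},
$$
and reindexing with $t' := n-s-t$ together with the coefficient symmetry just established rewrites this as $\sum_{s,t'} f_{s,n-s-t'}(V)\,x^s y^{t'} = \sum_{s,t'} f_{s,t'}(V)\,x^s y^{t'} = f_V(x,y)$.

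I do not anticipate any real obstacle: once one identifies the antipodal involution $u\mapsto -u$ as the right bijection, the only point requiring (minimal) care is verifying that the zero set, rather than, say, the positive set, is the parameter preserved under sign flipping; everything else is mechanical bookkeeping. Note also that general position is not needed for the argument — the observation holds for any vector configuration.
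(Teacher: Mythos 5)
Your argument is correct and is exactly the paper's (implicit) justification: the paper states the observation with no further proof beyond invoking the antipodal involution $F\leftrightarrow -F$ induced by $u\mapsto -u$, which preserves $|F_0|$ and swaps $|F_-|$ with $|F_+|=n-|F_0|-|F_-|$, precisely as you spell out. Your remark that general position is not needed is also accurate.
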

It is well-known \cite[Sec.~18.1]{Grunbaum:2003aa} that the total number of faces of a given dimension $d-s$ (of any level) in a simple arrangement in $S^d$ depends only on $n$, $d$, and $s$; more specifically:
\begin{lemma}
Let $\arr(V)$ be a simple arrangement of $n$ hemispheres in $S^d$. Then, for $0\leq s\leq d$,
the total number of $(d-s)$-dimensional faces (of any level) in $\arr(V)$ equals
\begin{equation}
\label{eq:total-number-faces}
\sum_t f_{s,t}(V) = 2\binom{n}{s}\sum_{i=0}^{d-s} \binom{n-s-1}{i} = \sum_{i=0}^d (1+(-1)^i)\binom{n}{d-i} \binom{d-i}{s}
\end{equation}
In terms of the $f$-polynomial, this can be expressed very compactly as
\begin{equation}
\label{eq:f-tutte}
\textstyle f_V(x,1) = \sum_{i=0}^d \binom{n}{i}\left(1+(-1)^{d-i}\right)(1+x)^i = 2\left(\binom{n}{d}(x+1)^d + \binom{n}{d-2}(x+1)^{d-2} + \dots\right)
\end{equation}
\end{lemma}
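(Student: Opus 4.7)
My plan is to stratify the $(d-s)$-faces of $\arr(V)$ according to the unique $(d-s)$-dimensional flat that contains each of them. Since the arrangement is simple, every $(d-s)$-face of $\arr(V)$ lies in the intersection $H_I := \bigcap_{i\in I} H_i$ for a unique $s$-subset $I \subseteq [n]$, and the $(d-s)$-faces contained in $H_I$ are exactly the top-dimensional cells of the induced simple central arrangement of the $n-s$ great $(d-s-1)$-spheres $\{H_i \cap H_I : i \notin I\}$ on the great $(d-s)$-sphere $H_I$. Summing over the $\binom{n}{s}$ choices of $I$ reduces the count to the classical region count for a simple central arrangement.

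For that count I would invoke the standard fact that a simple central arrangement of $m$ great $(e-1)$-spheres on $S^e$ has $\rho(m,e):=2\sum_{i=0}^e \binom{m-1}{i}$ full-dimensional cells, proved by the sweep recurrence $\rho(m,e)=\rho(m-1,e)+\rho(m-1,e-1)$: adding one new great sphere intersects the previous arrangement in a simple arrangement on the new sphere with $\rho(m-1,e-1)$ cells, each of which splits a unique old region in two (base cases $\rho(1,e)=2$ and $\rho(m,0)=2$ are immediate). Plugging in $m=n-s$ and $e=d-s$ yields the first equality of (\ref{eq:total-number-faces}).

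To obtain the second equality I would perform a short binomial manipulation: the Pascal identity $\binom{m}{k}=\binom{m-1}{k}+\binom{m-1}{k-1}$ telescopes to give $\sum_{i=0}^{e}\binom{m-1}{i}=\sum_{k=0,\,k\equiv e\,(\mathrm{mod}\,2)}^{e}\binom{m}{k}$; combining this with the symmetry $\binom{n}{s}\binom{n-s}{k}=\binom{n}{s+k}\binom{s+k}{s}$ and the reindexing $j=s+k$, $i=d-j$ converts the first expression into the signed sum $\sum_{i=0}^d(1+(-1)^i)\binom{n}{d-i}\binom{d-i}{s}$, where the parity condition $k\equiv d-s\,(\mathrm{mod}\,2)$ is precisely what produces the factor $1+(-1)^i$. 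Finally, (\ref{eq:f-tutte}) follows by specializing $y=1$ in $f_V(x,y)$, pulling the sum over $s$ inside via $\sum_s\binom{d-i}{s}x^s=(1+x)^{d-i}$, and substituting $i\mapsto d-i$ to land on the stated form $\sum_{i=0}^d \binom{n}{i}(1+(-1)^{d-i})(1+x)^i$.

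I do not expect a genuine obstacle: the geometric ingredient is the standard sweep argument for central arrangements, and the rest is elementary binomial bookkeeping. The step requiring the most care is the signed-sum reindexing that turns the first expression of (\ref{eq:total-number-faces}) into the alternating form, where the factor $1+(-1)^i$ has to be matched against the parity condition coming from the Pascal telescoping; once that is lined up correctly, the passage to (\ref{eq:f-tutte}) is essentially a one-line generating-function computation.
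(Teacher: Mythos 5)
Your proof is correct. Note that the paper itself does not prove this lemma at all --- it is quoted as a known fact with a citation to Gr\"unbaum (Sec.~18.1), so there is no internal proof to compare against; your argument (stratifying the $(d-s)$-faces by the unique $s$-subset $I$ with $F_0=I$, reducing to the Steiner--Schl\"afli count $2\sum_{i=0}^{e}\binom{m-1}{i}$ of regions of a simple central arrangement via the sweep recurrence, then the Pascal-telescoping and trinomial-revision reindexing to get the alternating form and \eqref{eq:f-tutte}) is precisely the classical route that the cited reference takes. The only point worth making explicit in a write-up is why each old region is split into exactly two pieces in the recurrence step: each region is an open spherically convex cell (an intersection of open hemispheres), so its intersection with the newly added great sphere is a single connected convex cell, giving the bijection between top cells of the induced arrangement and the regions that get split.
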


We call a vector configuration $V$ \emph{pointed} if it is contained in an open linear halfspace $\{x\in \R^r\colon \langle u,x\rangle >0\}$, for some $u\in S^d$, or equivalently, if 
$\bigcap_{i=1}^n H^+_i\neq \emptyset$. 
The closure of this intersection is then a simple (spherical) polytope $P$, the $0$-level of $\arr(V)$.
By radial projection onto the tangent hyperplane $\{x\in \R^r\colon \langle u,x\rangle = 1\}$, every pointed configuration $V\subset \R^r$ corresponds to a point set $S\subset \R^d$, see \cite[Sec.~5.6]{Matousek:2002aa}. 
The convex hull $P^\circ=\conv(S)$ is a simplicial polytope (the polar dual of $P$), and the elements of $\mathcal{F}(V)$ correspond to the partitions of $S$ by oriented affine hyperplanes. 

Linhart, Yang, and Philipp \cite{Linhart:2000aa} proved the following result, which generalizes the classical \emph{Dehn--Sommerville relations} for simple polytopes:
\begin{theorem}[Dehn--Sommerville Relations for Levels in Simple Arrangements]
\label{thm:DS-levels}
Let $V \in \R^{r\times n}$ be a vector configuration in general position. Then
\begin{equation}
\label{eq:DS-f-poly}
f_V(x,y)=(-1)^d f_V(-(x+y+1),y)
\end{equation}
Equivalently (by comparing coefficients), for $0\leq s\leq d$ and $0\leq t\leq n$,
\begin{equation}
\label{eq:DS-f-numbers}
f_{s,t}(V)=\sum_{j} \sum_{\ell} (-1)^{d-j} \binom{j}{s} \binom{j-s}{t-\ell} f_{j,\ell}(V) 
\end{equation}
\end{theorem}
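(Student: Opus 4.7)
My plan is to prove the polynomial identity \eqref{eq:DS-f-poly} directly; the coefficient form \eqref{eq:DS-f-numbers} then follows by comparing coefficients of $x^s y^t$. First, I would expand the right-hand side of \eqref{eq:DS-f-poly}. Using $(-1)^{d - |F_0|} = (-1)^{\dim F}$ and the multinomial theorem applied to $(x+y+1)^{|F_0|}$, one obtains
\[
(-1)^d f_V(-(x+y+1), y) = \sum_{F \in \mathcal{F}(V)} \sum_{A \sqcup B \sqcup C = F_0} (-1)^{\dim F}\, x^{|A|}\, y^{|B| + |F_-|},
\]
where the inner sum ranges over ordered partitions of $F_0$ into three (possibly empty) disjoint subsets $A, B, C$.

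The second step is to reorganize this double sum via a bijection. To each triple $(F, A, B)$, I would associate the sign vector $F'$ defined by $F'_0 := A$, $F'_- := F_- \sqcup B$, and $F'_+ := F_+ \sqcup (F_0 \setminus A \setminus B)$. A routine check shows $F \subseteq \overline{F'}$; moreover, because $\mathcal{A}(V)$ is simple, a local analysis near $F$ (the $|F_0|$ hyperplanes through $F$ are in general position and locally resemble a coordinate arrangement, so every sign pattern in $\{-,0,+\}^{F_0}$ is realized by a nearby cell) shows $F' \in \mathcal{F}(V)$. Altogether, $(F, A, B) \mapsto (F', F)$ is a bijection onto pairs with $F' \in \mathcal{F}(V)$ and $F \subseteq \overline{F'}$; the inverse reads $A = F'_0$ and $B = F'_- \setminus F_-$. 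Since $|A| = |F'_0|$ and $|B| + |F_-| = |F'_-|$, the sum rearranges to
\[
(-1)^d f_V(-(x+y+1), y) = \sum_{F' \in \mathcal{F}(V)} x^{|F'_0|} y^{|F'_-|} \sum_{\substack{F \in \mathcal{F}(V) \\ F \subseteq \overline{F'}}} (-1)^{\dim F}.
\]

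For the final step, I would observe that the faces $F \subseteq \overline{F'}$ are precisely the open cells of the CW decomposition of $\overline{F'}$ induced by $\mathcal{A}(V)$, so the inner alternating sum equals the Euler characteristic $\chi(\overline{F'})$. Since the arrangement is simple, $\overline{F'}$ is a closed topological ball of dimension $d - |F'_0|$, giving $\chi(\overline{F'}) = 1$. Consequently the whole expression collapses to $\sum_{F'} x^{|F'_0|} y^{|F'_-|} = f_V(x, y)$, as desired.

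The main obstacle is the bijection step, specifically confirming that the constructed $F'$ is always a genuine face of $\mathcal{A}(V)$; this is precisely where the simple-arrangement hypothesis is critical, since in a non-simple arrangement some of the $3^{|F_0|}$ local sign patterns near $F$ could fail to be realized by any cell and the correspondence would break. Once the bijection is in place, the Euler-characteristic collapse is a routine consequence of the fact that a closed ball has Euler characteristic $1$.
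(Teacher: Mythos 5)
Your proof is correct and is essentially the paper's own argument (Appendix~\ref{app_proof-DS}) run in the reverse direction: the paper expands $f_V$ via the identity $\sum_{G\geq F}x^{|G_+|}y^{|G_-|}z^{|G_0|}=x^{|F_+|}y^{|F_-|}(x+y+z)^{|F_0|}$ (simplicity) and the fact that each closed face is a polytope with Euler characteristic $1$, which are exactly your bijection step and your Euler-characteristic collapse. The homogeneous bookkeeping in the paper versus your direct multinomial expansion of $(-1)^d f_V(-(x+y+1),y)$ is a purely cosmetic difference.
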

\begin{remark}
The Dehn--Sommerville relations for polytopes correspond to the identity $f_V(x,0)=(-1)^d f_V(-(x+1),0)$. The coefficients on the right-hand side of \eqref{eq:DS-f-numbers} are zero unless $\ell \leq t$ (and $j\geq s$). This yields, for every $k$, a linear system of equations among the numbers $f_{s,t}$, $0\leq s\leq d$ and $t\leq k$, of face numbers of the  \emph{$(\leq k)$-sublevel} of the arrangement $\arr(V)$. An equivalent system of equations (expressed in terms of an \emph{$h$-matrix} that generalizes the $h$-vector of a simple polytope) was proved earlier by Mulmuley~\cite{Mulmuley:1993aa}, under the additional assumption the $(\leq k)$-sublevel is contained in an open hemisphere. 
Related relations have been rediscovered several times (e.g., in the recent work of Biswas et al.~\cite{Biswas:2024aa}).
\end{remark}

The central notion of this paper is the \emph{$g$-matrix} $g(V\to W)$ of a pair $V,W \in \R^{r\times n}$ of vector configurations, which generalizes the \emph{$g$-vector} of a simple polytope and encodes the difference $f(W)-f(V)$ between the $f$-matrices. The $g$-matrix was introduced by Streltsova and Wagner~\cite{SW:Arcs}, who showed that the first quadrant of the $g$-matrix (the \emph{small} $g$-matrix) of every vector configuration in $\R^3$ is non-negative and used this to establish, in the special case $d=n-4$, a conjecture of Eckhoff~\cite{Eckhoff:1993aa}, Linhart~\cite{Linhart:1994aa}, and Welzl~\cite{Welzl:2001aa} generalizing the Upper Bound Theorem for polytopes to sublevels of arrangements in $S^d$.

The geometric definition of the $g$-matrix is given in Sec.~\ref{sec:g-matrix}, based on how the $f$-matrix changes by \emph{mutations} in the course of generic \emph{continuous motion} (an idea with a long history, see, e.g., \cite{Andrzejak:1998aa}). The $g$-matrix is characterized by the following properties:
\begin{theorem} 
\label{thm:f-g}
Let $V,W \in \R^{r\times n}$ be a pair of vector configurations in general position. 

The \emph{$g$-matrix} $g(V\to W)$ of the pair is an $(r+1)\times (n-r+1)$-matrix  with integer entries $g_{j,k}:=g_{j,k}(V\to W)$, $0\leq j\leq r$, $0\leq k \leq n-r$, which has the following properties:  
\begin{enumerate}
\item For $0\leq j\leq r$ and $0\leq k\leq n-r$, the $g$-matrix satifies the skew-symmetries
\begin{equation}
\label{eq:g-skew}
g_{j,k}=-g_{r-j,k}=-g_{j,n-r-k}=g_{r-j,n-r-k}
\end{equation}
Thus, the $g$-matrix is determined by the submatrix $[g_{j,k}\colon 0\leq j \leq \floor{\frac{r-1}{2}}, 0\leq k \leq \floor{\frac{n-r-1}{2}}]$, which we call the \emph{small $g$-matrix}. 
Equivalently, the \emph{$g$-polynomial} $g(x,y):=g_{V\to W}(x,y):=\sum_{j,k} g_{j,k} x^j y^k \in \Z[x,y]$ satisfies
\begin{equation}
\label{eq:g-poly-skew}
\textstyle g(x,y)=-x^rg(\frac{1}{x},y)=-y^{n-r}g(x,\frac{1}{y})=x^ry^{n-r}g(\frac{1}{x},\frac{1}{y})
\end{equation}
\item The $g$-polynomial determines the difference $f_W(x,y)-f_V(x,y)$ of $f$-polynomials by
\begin{equation}
\label{eq:f-poly-g-poly}
f_W(x,y)-f_V(x,y) = (1+x)^r g (\textstyle \frac{x+y}{1+x},y)=\sum_{j=0}^r \sum_{k=0}^{n-r} g_{j,k}\cdot (x+y)^j (1+x)^{r-j} y^k
\end{equation}
Equivalently (by comparing coefficients), for $0\leq s\leq d$ and $0\leq t\leq n$,
\begin{equation}
\label{eq:f-g-coefficients}
f_{s,t}(W)-f_{s,t}(V) = \sum_{j,k} \binom{j}{t-k}\binom{r-j}{s-j+t-k} g_{j,k}(V\to W),
\end{equation}
\item $g(W\to V)=-g(V\to W)$.
\end{enumerate}
\end{theorem}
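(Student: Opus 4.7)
The plan is to reduce to single mutations via generic continuous motion. Connect $V$ to $W$ by a generic smooth path $\gamma : [0,1] \to (\R^r)^n$ with $\gamma(0) = V$ and $\gamma(1) = W$. By transversality, $V_t := \gamma(t)$ is in general position except at finitely many parameter values $t_1 < \dots < t_m$, at each of which a unique $(r+1)$-circuit $I_\mu \subset [n]$ becomes linearly dependent and its Radon partition flips. By the definition of the $g$-matrix in Section~\ref{sec:g-matrix}, each mutation $\mu$ contributes an elementary $g$-matrix $g_\mu$, so $g(V \to W) = \sum_\mu g_\mu$. Property (3) is immediate from this: reversing $\gamma$ traverses the same mutations in the opposite order, each contributing the opposite sign.

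For the skew-symmetries (1), each elementary mutation carries two independent binary choices, namely which half of the Radon partition of $I_\mu$ is labeled positive and which side of the associated flipping cell is tracked. Each reflection negates the contribution to $g_\mu$ while sending $j \mapsto r - j$ or $k \mapsto n - r - k$, respectively, so $g_\mu$ is supported on a single $(\mathbf{Z}/2)^2$-orbit $\{(j,k),(r-j,k),(j,n-r-k),(r-j,n-r-k)\}$ with alternating signs, yielding \eqref{eq:g-skew} and \eqref{eq:g-poly-skew}. Note that the antipodal symmetry of Observation~\ref{obs:f-antipodal-symm} alone forces only the composite relation $g(x,y) = x^r y^{n-r} g(1/x, 1/y)$; the two individual symmetries are strictly stronger and rely on the mutation-based definition.

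The core of the theorem is property (2): the change in the $f$-polynomial at a single mutation $\mu$ equals $(1+x)^r g_\mu(\frac{x+y}{1+x}, y) = \sum_{j,k} g_{j,k}(\mu)\,(x+y)^j (1+x)^{r-j} y^k$. The approach is a direct local enumeration. At the mutation, a specific family of faces of $\arr(V_{t_\mu - \eps})$ is destroyed and replaced by faces of $\arr(V_{t_\mu + \eps})$, and the affected faces can be indexed by the subset of circuit hyperplanes containing them (grouping contributions by the number of zero coordinates among circuit vectors yields the factor $(x+y)^j (1+x)^{r-j}$ after accounting for negative coordinates), together with the subset of non-circuit vectors whose negative side contains the face (contributing the factor $y^k$). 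Equation \eqref{eq:f-g-coefficients} is then equivalent to \eqref{eq:f-poly-g-poly} by extracting the coefficient of $x^s y^t$ using $(x+y)^j = \sum_a \binom{j}{a} x^{j-a} y^a$ and $(1+x)^{r-j} = \sum_b \binom{r-j}{b} x^b$.

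The main obstacle is this explicit local enumeration at a mutation: correctly identifying which faces are created and destroyed, tracking the shifts in their codimensions $s$ and levels $t$, and repackaging the resulting generating function into the prescribed shape $(x+y)^j (1+x)^{r-j} y^k$. Once (2) is established for a single mutation, summing over $\mu$ proves it for $g(V \to W) = \sum_\mu g_\mu$; path-independence is then automatic, since the linear map $g \mapsto (1+x)^r g(\frac{x+y}{1+x}, y)$ is injective on polynomials (the substitution $u = \frac{x+y}{1+x}$ can be inverted as a rational map in $(u,y)$), so $g(V \to W)$ is determined by $f_W - f_V$ and thus by the endpoints $V, W$ alone.
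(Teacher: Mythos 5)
Your proposal follows essentially the same route as the paper: the $g$-matrix is defined by summing elementary contributions over the mutations of a generic path; property (2) is reduced to the local count of faces of the antipodal pair of simplices destroyed and created at a single mutation (this is exactly the paper's Lemma~\ref{lem:mutation}, whose explicit formulas for $f_\sigma,f_{-\sigma},f_\tau,f_{-\tau}$ carry out the enumeration you sketch); and (1), (3) and path-independence follow from the mutation-based definition together with the injectivity of $g\mapsto(1+x)^r g(\frac{x+y}{1+x},y)$, as in the paper's Remark~\ref{rem:g-f+DS-g} and Lemma~\ref{lem:g-f}. The only blemishes are minor: at a mutation it is a unique $r$-subset of vectors whose orientation flips (an $(r+1)$-subset in $\R^r$ is always dependent, so ``a unique $(r+1)$-circuit becomes linearly dependent'' is not the right description), and the local enumeration you flag as the main obstacle is left as a sketch, though the sketch you give is the correct one.
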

\begin{remark} 
\label{rem:g-f+DS-g}
The system of equations \eqref{eq:f-g-coefficients} yields a linear transformation $T=T_{n,r}$ through which 
the $g$-matrix $g=g(V\to W)$ of the pair determines the difference $\Delta f=f(W)-f(V)$ of $f$-matrices by $\Delta f=T(g)$. As we will show below (Lemma~\ref{lem:g-f}), in the presence of the skew-symmetries \eqref{eq:g-skew}, the transformation $T$ is injective, i.e., $g(V\to W)$ is uniquely determined by $\Delta f=f(W)-f(V)$. Thus, Theorem~\ref{thm:f-g} could be taken as a formal definition of the $g$-matrix. 
\end{remark}
\begin{remark}
The skew-symmetry $g(x,y)=-x^r g(\frac{1}{x},y)$ reflects the Dehn--Sommerville relation \eqref{eq:DS-f-poly}, and 
the symmetry $g(x,y)=x^ry^{n-r}g(\frac{1}{x},\frac{1}{y})$ reflects the antipodal symmetry \eqref{eq:f-symm}.
\end{remark}

We are now ready to state our main results.

\begin{theorem}
\label{thm:f-space-g-space} 
Let $\mathcal{V}_{n,r}$ be the set of vector configurations $V\in \R^{r\times n}$ in general position. Let 
\[
\fspace_{n,r}:=\aff\{f(V)\colon V\in \mathcal{V}_{n,r}\}, \qquad \gspace_{n,r}:=\lin\{g(V \to W) \colon V,W \in \mathcal{V}_{n,r}\} 
\]
be the affine space spanned by all $f$-matrices and the linear space spanned by all $g$-matrices of pairs, respectively.
Then $\dim \fspace_{n,r}=\dim \gspace_{n,r}=\floor{\frac{r+1}{2}}\floor{\frac{n-r+1}{2}}$; more precisely, 
\begin{equation}
\label{eq:g-space}
\gspace_{n,r}=\left\{ g \in \R^{(r+1)\times (n-r+1)} \colon \begin{array}{c}g_{j,k}=-g_{r-j,k}=-g_{j,n-r-k}=g_{r-j,n-r-k}\\ \textrm{for } 0\leq j\leq r, 0\leq k\leq n-r\end{array}
\right\}
\end{equation}
is the space of all \emph{real} $(r+1)\times (n-r+1)$-matrices satisfying the skew-symmetries \eqref{eq:g-skew}, and 
\[
\fspace_{n,r}=f(V_0)+T(\gspace_{n,r})
\]
for any fixed $V_0 \in \mathcal{V}_{n,r}$, where $T=T_{n,r}$ is the injective linear transformation given by \eqref{eq:f-g-coefficients}.
\end{theorem}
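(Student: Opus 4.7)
Proof plan. By Theorem~\ref{thm:f-g}(1), every element of $\gspace_{n,r}$ satisfies the skew-symmetries \eqref{eq:g-skew}, so $\gspace_{n,r}$ is contained in the space $\mathcal{S}$ on the right-hand side of \eqref{eq:g-space}. A matrix in $\mathcal{S}$ is uniquely determined by its entries in the small rectangle $0\le j\le \floor{(r-1)/2}$, $0\le k\le \floor{(n-r-1)/2}$ (the remaining entries are fixed by \eqref{eq:g-skew}, and any entry forced to equal its own negation is zero), so $\dim\mathcal{S}=\floor{(r+1)/2}\cdot\floor{(n-r+1)/2}$. Combined with the injectivity of $T$ on $\mathcal{S}$ (Remark~\ref{rem:g-f+DS-g}, Lemma~\ref{lem:g-f}) and Theorem~\ref{thm:f-g}(2), this already yields the upper bound $\dim\fspace_{n,r}=\dim\gspace_{n,r}\le\floor{(r+1)/2}\floor{(n-r+1)/2}$ and, once the equality $\gspace_{n,r}=\mathcal{S}$ is established, the identity $\fspace_{n,r}=f(V_0)+T(\gspace_{n,r})$.

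The substantive content is the reverse inclusion $\mathcal{S}\subseteq \gspace_{n,r}$. My plan is to exploit the mutation-based description of the $g$-matrix introduced in Section~\ref{sec:g-matrix}. Along a generic smooth path from $V$ to $W$ in $\R^{r\times n}$, the configuration stays in general position except at finitely many transverse crossings of the codimension-one discriminant — isolated instants at which a single $r$-subset of columns becomes linearly dependent. By additivity of $g$ along path concatenation (inherent in this description), the $g$-matrix of the pair decomposes as the sum of the contributions from the individual crossings, so it suffices to realize a spanning family of skew-symmetric matrices by pairs $(V,W)$ differing by a single mutation.

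Each mutation carries a combinatorial type indexed by parameters $(j,k)$ in the small rectangle: $k$ records the number of the $n-r$ non-mutating vectors lying on a prescribed side of the degenerate hyperplane at the instant of collision, while $j$ records a complementary invariant of the vanishing linear dependency among the $r$ mutating vectors. I expect a direct local computation — tracking how the bivariate $f$-polynomial changes when a single $r$-tuple reverses orientation under a mutation of type $(j,k)$, then inverting via the injective transformation $T$ — to show that the contribution to the $g$-matrix is a nonzero multiple of the elementary skew-symmetric matrix $E_{j,k}$ supported on the four-element orbit $\{(j,k),(r-j,k),(j,n-r-k),(r-j,n-r-k)\}$ with signs dictated by \eqref{eq:g-skew}. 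The main obstacle is the combinatorial bookkeeping of this local computation together with the explicit construction, for each $(j,k)$ in the small range, of a concrete witnessing pair obtained by a single mutation of the prescribed type; once done, the $E_{j,k}$ are automatically linearly independent by their disjoint supports, yielding $\mathcal{S}\subseteq \gspace_{n,r}$ and completing the proof.
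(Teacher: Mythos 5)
Your framework is sound and, up to the point where the real work begins, coincides with the paper's: the inclusion $\gspace_{n,r}\subseteq\mathcal{S}$ (the right-hand side of \eqref{eq:g-space}) via the skew-symmetries, the count $\dim\mathcal{S}=\floor{\frac{r+1}{2}}\floor{\frac{n-r+1}{2}}$, and the reduction of the statement about $\fspace_{n,r}$ to the one about $\gspace_{n,r}$ via Theorem~\ref{thm:f-g} and Lemma~\ref{lem:g-f} are all as in the paper. Moreover, the ``local computation'' you propose to carry out is already available: by Lemma~\ref{lem:mutation} and Definition~\ref{def:g-pair}, a single mutation of type $(j,k)$ with $2j\neq r$ and $2k\neq n-r$ has $g$-matrix exactly the $\pm1$-matrix supported on the orbit $\{(j,k),(r-j,k),(j,n-r-k),(r-j,n-r-k)\}$, and independence of the chosen mutation sequence is Remark~\ref{rem:g-f+DS-g}/Lemma~\ref{lem:g-f}. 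Consequently, the only substantive claim left in your plan is the existence, for every $(j,k)$ with $0\le j\le\floor{\frac{r-1}{2}}$ and $0\le k\le\floor{\frac{n-r-1}{2}}$, of a pair $V,W\in\mathcal{V}_{n,r}$ (in general position throughout) differing by a single mutation of type $(j,k)$. This is precisely the step you defer as ``the main obstacle,'' so as written the argument has a gap exactly where its geometric content should be; note that the paper proves such an ``all types occur'' statement only in the harder pointed setting (Lemma~\ref{lem:all-mutations}, and only for $j\ge1$), and in the unrestricted setting it deliberately sidesteps it.

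The paper's actual route is different: it takes a generic deformation from a coneighborly configuration to a neighborly one (cocyclic and cyclic configurations, Example~\ref{ex:cyclic-cocyclic}), observes that each mutation changes the small $g$-matrix of the running $g(V_0\to V_t)$ in at most one entry by $\pm1$, invokes Lemma~\ref{gjk-nbly} (proved by a double induction via deletion/contraction) to get that every small entry of $g(V_0\to V_N)$ is strictly positive, and then applies the combinatorial ``staircase'' Lemma~\ref{lem:basis1} to extract a basis of $\mathcal{S}$ from the partial $g$-matrices $g(V_0\to V_t)$ — so its basis consists of $g$-matrices of far-apart pairs rather than of single mutations. Your route, if completed, would be somewhat more elementary (no need for Lemma~\ref{gjk-nbly}), and the missing construction is indeed fillable: for instance, fix $u\in S^d$, choose $r$ vectors whose great spheres bound a tiny spherical simplex near $u$ with a prescribed sign pattern containing exactly $j$ minus signs, choose the remaining $n-r$ vectors generically with exactly $k$ of them negative at $u$, and push one of the $r$ great spheres across $u$; alternatively, observe that along the paper's coneighborly-to-neighborly deformation a mutation of every small type must occur, and use those single-mutation pairs. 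But some such argument must actually be supplied; without it the spanning claim $\mathcal{S}\subseteq\gspace_{n,r}$, and hence the dimension formula, remains unproven.
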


\begin{theorem}
\label{thm:f-space-g-space-pointed}
Let $\mathcal{V}^0_{n,r} \subset \mathcal{V}_{n,r}$ be the subset of pointed configurations (corresponding to point sets in $\R^d$, $d=r-1$), and let  
\[
\fspace^0_{n,r}:=\aff\{f(V)\colon V\in \mathcal{V}^0_{n,r}\}, \qquad \gspace^0_{n,r}:=\lin\{g(V \to W) \colon V,W \in \mathcal{V}^0_{n,r}\}
\]
be the corresponding subspaces of $\fspace_{n,r}$ and $\gspace_{r,n}$. Then $\dim \fspace^0_{n,r}=\dim \gspace^0_{n,r}=\floor{\frac{r-1}{2}}\cdot \floor{\frac{n-r+1}{2}}$. More precisely, 
\[
\gspace^0_{n,r}=\{g\in \gspace_{n,r}\colon g_{0,k}=0, 0\leq k \leq n-r\},\quad \textrm{and}\quad \fspace^0_{n,r}=f(V_0)+T(\gspace^0_{n,r})
\]
for any $V_0 \in \mathcal{V}^0_{n,r}$.
\end{theorem}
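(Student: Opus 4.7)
The plan is to establish the two containments
\[
\gspace^0_{n,r} \;\subseteq\; \{g\in \gspace_{n,r}\colon g_{0,k}=0\text{ for all }k\} \;\subseteq\; \gspace^0_{n,r},
\]
after which the formula $\fspace^0_{n,r}=f(V_0)+T(\gspace^0_{n,r})$ follows from the injectivity of $T=T_{n,r}$ noted in Remark~\ref{rem:g-f+DS-g}. The dimension count then reduces $\dim \gspace_{n,r}=\floor{\frac{r+1}{2}}\floor{\frac{n-r+1}{2}}$ by the $\floor{\frac{n-r+1}{2}}$ free parameters of the $j=0$ row (once the skew-symmetry $g_{0,k}=-g_{0,n-r-k}$ is accounted for), leaving $\floor{\frac{r-1}{2}}\floor{\frac{n-r+1}{2}}$.

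For the first containment, the starting point is that $\mathcal{V}^0_{n,r}$ is path-connected: any pointed configuration can be continuously rotated so as to share a prescribed apex $u\in S^d$ (using that $S^d$ is connected), and those configurations pointed along a fixed $u$ sit inside the open convex cone $\{V\colon \langle u,v_i\rangle >0 \text{ for all }i\}$, in which a generic straight-line interpolation stays in general position. Hence any two pointed $V,W$ are joined by a generic path $\gamma\subset \mathcal{V}^0_{n,r}$ that crosses the discriminant (where $r$ columns become linearly dependent) transversally at isolated times. Combining the mutation-based description of $g$ from Sec.~\ref{sec:g-matrix} with the cocycle property of $g$ (immediate from the additivity of $\Delta f$ plus the injectivity of $T$), $g(V\to W)$ decomposes as a sum of single-mutation contributions $\Delta g$ along $\gamma$. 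At each such mutation, the instantaneous linear dependency $\sum_{i\in I}\lambda_i v_i=0$ with $|I|=r$ cannot have all $\lambda_i$ of the same sign, for otherwise $\sum_{i\in I}\lambda_i\langle u,v_i\rangle\ne 0$ contradicts the dependency. The explicit form of $\Delta g$ shows that its row-$0$ (and, by the skew-symmetry $g_{j,k}=-g_{r-j,k}$, its row-$r$) contribution vanishes unless the dependency is of uniform sign; hence $g_{0,k}(V\to W)=0$ for all $k$.

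For the reverse containment, one must realize every matrix in the target subspace as $g(V_0\to W)$ for a fixed pointed $V_0$ and varying pointed $W$. I would take $V_0$ to be a cyclic vector configuration (vectors on the moment curve), which is pointed and whose minimal linear dependencies are described explicitly by Gale's evenness condition. For each pair of indices $1\le j\le \floor{\frac{r-1}{2}}$ and $0\le k\le \floor{\frac{n-r-1}{2}}$ one can then isolate a small perturbation $W$ of $V_0$ crossing a mutation whose dependency has $|I^+|=j$ and in which exactly $k$ of the remaining $n-r$ vectors lie on a prescribed side of $\lin\{v_i\colon i\in I\}$. The corresponding $\Delta g$ is concentrated in row $j$, and a triangular induction on $(j,k)$ then assembles $\floor{\frac{r-1}{2}}\floor{\frac{n-r+1}{2}}$ linearly independent $\Delta g$'s spanning the subspace of skew-symmetric matrices with $g_{0,\ast}=0$.

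I expect the surjectivity step to be the main obstacle. The first containment is essentially a sign-pattern observation on linear dependencies of pointed configurations. The reverse requires selecting a reference configuration whose Radon partitions realize every admissible sign-split $(|I^+|,|I^-|)$ together with every admissible level $k$, and checking linear independence of the resulting pointed-mutation $g$-contributions after projection onto the small $g$-matrix. The general proof of Theorem~\ref{thm:f-space-g-space} exhibits mutations that span $\gspace_{n,r}$, but a priori those mutations may leave $\mathcal{V}^0_{n,r}$; extracting a spanning family internal to the pointed locus is the delicate point.
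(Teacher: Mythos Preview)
Your first containment is correct and essentially identical to the paper's argument: both deform $V$ to $W$ through a path of pointed configurations lying in a common open halfspace, and both observe that a mutation of type $(0,k)$ requires the $r$ vectors involved to have a linear dependency with all coefficients of one sign, which is impossible while the configuration stays pointed. The paper phrases this as ``$0\notin\conv(V(t))$, hence no mutations of type $(0,k)$,'' but the content is the same.

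The surjectivity direction is where your proposal has a genuine gap --- and you correctly flag it as the main obstacle without resolving it. Your suggestion to take $V_0$ cyclic and ``isolate a small perturbation crossing a mutation'' of each prescribed type $(j,k)$ does not work as stated: a \emph{single} mutation adjacent to a fixed $V_0$ has its type determined by the sign pattern of the unique circuit on the degenerating $r$-tuple together with the level $k$, and there is no reason every pair $(j,k)$ is realizable one step away from the cyclic configuration (whose circuits have a very rigid alternating sign structure). The paper sidesteps this entirely. It does not anchor at a fixed $V_0$; instead (Lemma~\ref{lem:all-mutations}) it builds one long continuous motion of a point set in $\R^d$ --- $d-1$ fixed points $p_1,\dots,p_{d-1}$, a cluster of $n-d$ further points in a tiny ball near a $d$th reference point, and one moving point $p$ swept successively along lines $\ell_\sigma$ indexed by $\emptyset\neq\sigma\subseteq[d]$ and passing through the regions $R_\sigma$ of the reference simplex --- arranged so that as $p$ traverses $\ell_\sigma$ the ``interesting'' hyperplane crossings produce mutations of type $(d-|\sigma|+1,k)$ for every $0\le k\le n-1-d$. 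Running over all $\sigma$ gives a sequence $V_0,V_1,\dots,V_N$ of pointed configurations in which every type $(j,k)$ with $1\le j\le\lfloor\tfrac{r-1}{2}\rfloor$, $0\le k\le\lfloor\tfrac{n-r-1}{2}\rfloor$ occurs at least once. Linear independence of the resulting small $g$-matrices $g(V_0\to V_t)$ is then handled not by a triangular induction but by a clean abstract lemma (Lemma~\ref{lem:basis1}): in any sequence of integer vectors starting at $0$ and changing by $\pm1$ in one coordinate at each step, if every coordinate eventually moves then some subsequence is a basis. This removes any need to control which mutation types are directly adjacent to a given configuration.
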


\begin{remark}
As a specific base configuration $V_0$ in both theorems, one can take the \emph{cyclic} vector configuration $\cyclic(n,r)$ (see Example~\ref{ex:cyclic-cocyclic}), whose $f$-matrix is known explicitly \cite{Andrzejak:2003aa}. 
\end{remark}

\subsection{Dependency Patterns and Radon Partitions}
Let $V = \{ v_1, \ldots, v_n \} \subset \R^r$ be a vector configuration in general position, and let $\mathcal{F}^*(V)$ be the set of all sign vectors $(\sgn(\lambda_1),\dots,\sgn(\lambda_n)) \in \{-1,0,+1\}^n$ given by non-trivial linear dependencies $\sum_{i=1}^n \lambda_i v_i=0$ (with coefficients $\lambda_i\in \R$, not all of them are zero). We call $\mathcal{F}^*(V)$ the \emph{dependency patterns} of $V$. If $V$ is a pointed configuration corresponding to a point set $S\subset \R^d$, $d=r-1$, then the elements of $\mathcal{F}^*(V)$ encode the sign patterns of affine dependencies of $S$, hence they correspond bijectively to (ordered) \emph{Radon partitions} $S=S_-\sqcup S_0 \sqcup S_+$, $\conv(S_+) \cap \conv(S_-)\neq \emptyset$. 

Both $\mathcal{F}(V)$ and $\mathcal{F}^*(V)$ are invariant under invertible linear transformations of $\R^r$ and under positive rescaling (multiplying each vector $v_i$ by some positive scalar $\alpha_i>0$).

\begin{definition}[$f^*$-matrix and $f^*$-polynomial] 
\label{def:fstar-poly}
For integers $s$ and $t$, define\footnote{Note that $f^*_{s,t}(V)=0$ unless $r+1\leq s\leq n$ and $0\leq t\leq s$.%
}
\[
f^*_{s,t}(V):=|\{F\in \mathcal{F}^*(V)\mid |F_-|=t, |F_+|=s-t\}
\]
Together, these numbers form the \emph{$f^*$-matrix} $f^*(V)=[f^*_{s,t}(V)]$. Equivalently, we can encode this data into the bivariate  \emph{$f^*$-polynomial} $f^*_V(x,y) \in \Z[x,y]$ defined by 
\[
f^*_{V}(x,y) := \sum_{F\in \mathcal{F}^*(V)} x^{|F_0|} y^{|F_-|} = \sum_{s,t} f^*_{s,t}(V)\, x^{n-s} y^t
\]
\end{definition}

Given a vector configuration $V \in \R^{r\times n}$ of rank $r$, there is a \emph{Gale dual} vector configuration $V^*  \in \R^{(n-r)\times n}$, whose definition and properties we will review in Section~\ref{sec:Gale}. The configurations $V$ and $V^*$ determine each other (up to invertible linear transformations on $\R^r$ and $\R^{n-r}$, respectively), and they satisfy 
$(V^*)^*=V$ and $\mathcal{F}^*(V)=\mathcal{F}(V^*)$ (hence also $\mathcal{F}(V)=\mathcal{F}^*(V^*)$). It follows that
$f^*_{s,t}(V)=f_{n-s,t}(V^*)$ for all $s,t$, and 
\[
f^*_V(x,y)=f_{V^*}(x,y)
\]
Therefore, by Gale duality, Theorem~\ref{thm:f-space-g-space} immediately gives a complete description of the affine space 
$\fspace^*_{n,r}$ spanned by the $f^*$-matrices of vector configurations $V\in \R^{r\times n}$. However, Gale duals of pointed vector configurations are not pointed, hence a bit more is needed to get a description of the subspace 
$(\fspace^*_{n,r})^0$ spanned by the $f^*$-matrices of pointed vector configurations in $\R^r$ (which count the number of Radon partitions of given types for the corresponding point sets in $\R^d$). We will show the following analogue of Thm~\ref{thm:f-g}, Eq.~\eqref{eq:f-poly-g-poly}.

\begin{theorem}
\label{thm:f*-g*}
Let $V,W$ be configurations of $n$ vectors in $\R^r$. Then
\begin{equation}
\label{eq:f*-g*}
f^*_W(x,y) - f^*_{V}(x,y)= \sum_{j,k} \underbrace{g_{j,k}(W \to V)}_{=-g_{j,k}(V \to W)} (x+y)^k (x+1)^{n-r-k} y^j
\end{equation}
\end{theorem}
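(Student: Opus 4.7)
The plan is to deduce Theorem~\ref{thm:f*-g*} from Theorem~\ref{thm:f-g} by passing to Gale duals. The bridge is the identity $\mathcal{F}^*(V)=\mathcal{F}(V^*)$, which implies $f^*_V(x,y)=f_{V^*}(x,y)$ for every $V$, so that
\[
f^*_W(x,y)-f^*_V(x,y) \;=\; f_{W^*}(x,y)-f_{V^*}(x,y).
\]

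First I would apply Eq.~\eqref{eq:f-poly-g-poly} to the pair $(V^*,W^*)$ of configurations of $n$ vectors in $\R^{n-r}$ (so the roles of $r$ and $n-r$ are interchanged). This produces
\[
f_{W^*}(x,y)-f_{V^*}(x,y)\;=\;\sum_{j'=0}^{n-r}\sum_{k'=0}^{r} g_{j',k'}(V^*\to W^*)\,(x+y)^{j'}(1+x)^{(n-r)-j'}\,y^{k'}.
\]

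The heart of the proof is then a Gale-duality identity for $g$-matrices, namely
\[
g_{j',k'}(V^*\to W^*)\;=\;g_{k',j'}(W\to V)\;=\;-g_{k',j'}(V\to W),
\]
asserting that Gale duality interchanges the two indices of the $g$-matrix and flips the sign; I expect this to be the main technical obstacle. The natural route is via the geometric/mutation definition of the $g$-matrix developed in Section~\ref{sec:g-matrix}: along a generic continuous motion $V(t)$ from $V$ to $W$, the Gale dual $V^*(t)$ moves from $V^*$ to $W^*$, the two motions share the same critical times, and at each such time a mutation in $\mathcal{F}(V(t))$ corresponds to a mutation in $\mathcal{F}(V^*(t))=\mathcal{F}^*(V(t))$ in which the two combinatorial parameters (roughly, the dimension and the level of the face responsible for the mutation) are interchanged, with a sign flip coming from the reversal of orientation induced by Gale duality on the relevant circuit. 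Alternatively, one can invoke the uniqueness statement from Remark~\ref{rem:g-f+DS-g}: set $G^*_{j',k'}:=-g_{k',j'}(V\to W)$, verify directly from the skew-symmetries \eqref{eq:g-skew} that $G^*$ satisfies the analogous skew-symmetries in rank $n-r$, and then show that $G^*$ produces the correct $\Delta f$ for the Gale dual pair; uniqueness then identifies $G^*$ with $g(V^*\to W^*)$.

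Finally, substituting the duality identity into the formula for $f_{W^*}-f_{V^*}$ and reindexing by $k:=j'$, $j:=k'$ rewrites the right-hand side as
\[
\sum_{j=0}^{r}\sum_{k=0}^{n-r} g_{j,k}(W\to V)\,(x+y)^{k}(1+x)^{n-r-k}\,y^{j},
\]
which is exactly~\eqref{eq:f*-g*}. The index ranges match: $0\leq j'\leq n-r$ becomes $0\leq k\leq n-r$ and $0\leq k'\leq r$ becomes $0\leq j\leq r$, which are the natural ranges for $g(V\to W)$.
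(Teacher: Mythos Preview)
Your overall plan is sound but takes a different route from the paper, and one of your two proposed sub-arguments is circular.

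The paper does not pass to Gale duals. It uses Theorem~\ref{thm:f-fstar}, the explicit relation
\[
f^*_V(x,y)=(x+y+1)^n-(-1)^rx^n-(x+1)^n f_V\!\bigl(-\tfrac{x}{x+1},\tfrac{x+y}{x+1}\bigr),
\]
so that upon subtracting, $f^*_W-f^*_V = -(x+1)^n\bigl[f_W-f_V\bigr]\big|_{x\mapsto -\frac{x}{x+1},\,y\mapsto\frac{x+y}{x+1}}$. Under this substitution $x+y\mapsto\tfrac{y}{x+1}$ and $1+x\mapsto\tfrac{1}{x+1}$, so each summand $g_{j,k}(x+y)^j(1+x)^{r-j}y^k$ of \eqref{eq:f-poly-g-poly} is sent to $-g_{j,k}(x+y)^k(1+x)^{n-r-k}y^j$, which is \eqref{eq:f*-g*}. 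In the paper's logical order, the Gale-duality identity $g_{j,k}(V\to W)=-g_{k,j}(V^*\to W^*)$ is then \emph{derived} from Theorems~\ref{thm:f-g} and~\ref{thm:f*-g*} (Corollary~\ref{cor:g-pair-dual}), not used to prove them.

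Your route via Gale duals is a legitimate alternative (the paper explicitly remarks that Theorem~\ref{thm:f*-g*} can also be proved ``by studying how $\mathcal{F}^*$ changes during mutations''), but all the content now sits in the identity $g_{j',k'}(V^*\to W^*)=-g_{k',j'}(V\to W)$, and your two sketches for it are unequal. Route~(a) is the right idea: a mutation of $V$ at the $r$-tuple $R$ makes the complementary $(n{-}r)$-tuple degenerate in $V^*$, and one must verify that a type-$(j,k)$ mutation of $V$ becomes a type-$(k,j)$ mutation of $V^*$ with opposite orientation; this is true but you have not carried it out. Route~(b), however, is circular: ``showing that $G^*$ produces the correct $\Delta f$ for the Gale dual pair'' means showing $T_{n,n-r}(G^*)=f_{W^*}-f_{V^*}=f^*_W-f^*_V$, and expanding $T_{n,n-r}(G^*)$ gives precisely the right-hand side of~\eqref{eq:f*-g*}. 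So route~(b) assumes what is to be proved, unless you compute $f^*_W-f^*_V$ independently via Theorem~\ref{thm:f-fstar}---at which point you have simply reproduced the paper's argument.
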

As before, \eqref{eq:f*-g*} implies that the difference $f^*=f^*(W)-f^*(V)$ of $f^*$-matrices is the image of the $g$-matrix $g=g(V\to W)$ under an injective linear transformation $S=S_{n,r}$, $f^*=S(g)$. Thus, we get the following:

\begin{theorem}
\label{thm:fstar-space-gstar-space-pointed}
Let $\mathcal{V}^0_{n,r} \subset \mathcal{V}_{n,r}$ be the subset of pointed configurations, and let  
\[
(\fspace^*_{n,r})^0:=\aff\{f^*(V)\colon V\in \mathcal{V}^0_{n,r}\}
\]
be the corresponding subspace of $\fspace^*_{n,r}$. Then $\dim (\fspace^*_{n,r})^0=\floor{\frac{r-1}{2}}\cdot \floor{\frac{n-r+1}{2}}$ and
\[ 
(\fspace^*_{n,r})^0=f^*(V_0)+S(\gspace^0_{n,r})
\]
for any $V_0 \in \mathcal{V}^0_{n,r}$, where $S=S_{n,r}$ is the injective linear transformation given by \eqref{eq:f*-g*}.
\end{theorem}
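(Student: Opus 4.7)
The plan is to derive this theorem as a direct corollary of Theorem~\ref{thm:f-space-g-space-pointed} together with Theorem~\ref{thm:f*-g*}. Since $(\fspace^*_{n,r})^0$ is an affine space containing $f^*(V_0)$, I would rewrite it as $(\fspace^*_{n,r})^0 = f^*(V_0) + \lin\{f^*(V) - f^*(V_0) \colon V\in \mathcal{V}^0_{n,r}\}$. Applying \eqref{eq:f*-g*} to the pair $(V,V_0)$, each such difference can be written as $f^*(V) - f^*(V_0) = S(g(V \to V_0))$, which lies in $S(\gspace^0_{n,r})$ because both $V$ and $V_0$ are pointed. This establishes the inclusion $(\fspace^*_{n,r})^0 \subseteq f^*(V_0) + S(\gspace^0_{n,r})$.

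For the reverse inclusion, the key step is to show that $\lin\{g(V_0 \to V) \colon V\in \mathcal{V}^0_{n,r}\}$ coincides with $\gspace^0_{n,r}$. The nontrivial inclusion relies on the telescoping identity
\[
g(V \to W) = g(V_0 \to W) - g(V_0 \to V) \qquad \text{for all } V, W, V_0 \in \mathcal{V}_{n,r},
\]
which follows from the injectivity of the transformation $T$ (noted in Remark~\ref{rem:g-f+DS-g}) applied to $f(W) - f(V) = (f(W) - f(V_0)) - (f(V) - f(V_0))$. Consequently, for pointed $V_0$, every generator $g(V \to W)$ of $\gspace^0_{n,r}$ (with $V, W$ pointed) lies in the span $\lin\{g(V_0 \to V) \colon V\in \mathcal{V}^0_{n,r}\}$. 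Applying $S$ then gives $S(\gspace^0_{n,r}) \subseteq (\fspace^*_{n,r})^0 - f^*(V_0)$, which combined with the opposite inclusion yields the equality $(\fspace^*_{n,r})^0 = f^*(V_0) + S(\gspace^0_{n,r})$.

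For the dimension, I would invoke the injectivity of $S$ asserted in Theorem~\ref{thm:f*-g*} to conclude $\dim (\fspace^*_{n,r})^0 = \dim S(\gspace^0_{n,r}) = \dim \gspace^0_{n,r} = \floor{\frac{r-1}{2}}\cdot \floor{\frac{n-r+1}{2}}$, where the last equality is from Theorem~\ref{thm:f-space-g-space-pointed}. The main obstacle therefore lies not in the present statement but in its two inputs: proving Theorem~\ref{thm:f*-g*} (in particular, verifying the injectivity of $S$, which I expect reduces to linear independence of the bivariate polynomials $(x+y)^k(x+1)^{n-r-k}y^j$ over the relevant index range via a leading-monomial or triangularity argument) and proving the pointed version Theorem~\ref{thm:f-space-g-space-pointed}. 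Granted these, the present theorem is a purely formal consequence parallel in structure to its polar-dual analogue.
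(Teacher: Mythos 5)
Your proposal is correct and follows essentially the same route as the paper: Section~\ref{sec:g-space} explicitly reduces Theorem~\ref{thm:fstar-space-gstar-space-pointed} to the description of $\gspace^0_{n,r}$ (Theorem~\ref{thm:f-space-g-space-pointed}) via Theorem~\ref{thm:f*-g*} and the injectivity of $S$, which is exactly the formal derivation you spell out (affine span of differences, telescoping $g(V\to W)=g(V_0\to W)-g(V_0\to V)$, injectivity for the dimension count). Your only caveat is accurate: injectivity of $S$ is asserted in the surrounding text rather than inside Theorem~\ref{thm:f*-g*} itself, and it can be obtained either by your linear-independence argument or by composing Theorem~\ref{thm:f-fstar} with the injectivity of $T$ from Lemma~\ref{lem:g-f}.
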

\begin{remark}\label{rem:OM}
The sets $\mathcal{F}(V)$ and $\mathcal{F}^*(V)$ determine each other (see Lemma~\ref{lem:Farkas}), and analogously for the $f$-matrix and the $f^*$-matrix (Theorem~\ref{thm:f-fstar}). We say that two vector configurations $V,W \in \R^{r\times n}$ have the same \emph{combinatorial type} if (up to a permutation of the vectors) $\mathcal{F}(V)=\mathcal{F}(W)$ (equivalently,  $\mathcal{F}^*(V)=\mathcal{F}^*(W)$). We call $V$ and $W$ \emph{weakly equivalent} if they have identical $f$-matrices (equivalently, identical $f^*$-matrices).

For readers familiar with \emph{oriented matroids} (see \cite[Ch.~6]{Ziegler:1995aa} or \cite{Bjorner:1999aa}), $\mathcal{F}^*(V)$ and $\mathcal{F}(V)$ are precisely the sets of \emph{vectors} and \emph{covectors}, respectively, of the oriented matroid realized by $V$. However, speaking of ``(co)vectors of a vector configuration'' seems potentially confusing, and we hope that the terminology of dissection and dependency patterns is more descriptive. The Dehn--Sommerville relations hold for (uniform, not necessarily realizable) oriented matroids. We believe that Theorems~\ref{thm:f-space-g-space}, \ref{thm:f-space-g-space-pointed}, and \ref{thm:fstar-space-gstar-space-pointed} can also be generalized to that setting.
We plan to treat this in detail in a future paper.
\end{remark}
The remainder of the paper is structured as follows. We present some general background, in particular regarding Gale duality and neighborly and coneighborly configurations, in Section~\ref{sec:Gale}. In Section~\ref{sec:g-matrix}, we give the geometric definition of the $g$-matrix through continuous motion and prove Theorems~\ref{thm:f-g} and \ref{thm:f*-g*}. 
In Section~\ref{sec:g-space}, we then prove Theorems~\ref{thm:f-space-g-space}, \ref{thm:f-space-g-space-pointed}, and \ref{thm:fstar-space-gstar-space-pointed}.

\section{Gale Duality and Neighborly and Coneighborly Configurations}
\label{sec:Gale}

\begin{definition}[Gale Duality] 
\label{def:Gale}
Two vector configurations $V \in \R^{r\times n}$ and $W\in \R^{(n-r)\times n}$ are called \emph{Gale duals} of one another if the rows of $V$ and the rows of $W$ span subspaces of $\R^n$ that are orthogonal complements of one another. Since we always assume that $V$ and $W$ are in general position and of full rank, this is equivalent to the condition $VW^{\top} = 0$.
\end{definition}

It is well-known  that 
Gale dual configurations determine each other up to linear isomorphisms of their ambient spaces $\R^r$ and $\R^{n-r}$, respectively \cite[Sec.~5.6]{Matousek:2002aa}. Thus, we will speak of the \emph{Gale dual} of $V$, which we denote by $V^*$. Obviously, $(V^*)^*=V$.
We have 
\[\mathcal{F}^{*}(V)=\mathcal{F}(V^*),\qquad \textrm{hence} \qquad f^*_{s,t}(V)=f_{n-s,t}(V^*)\quad \textrm{for all }s,t.\]

Let $V=\{v_1,\dots,v_n\}\subseteq \R^r$ be a configuration of $n$ vectors in general position. We call a subset $W\subseteq V$ \emph{extremal} if there exists a linear hyperplane $H$ that contains all vectors in $W$ and such that one of the two open halfspaces bounded by $H$ contains all the remaining vectors in $V\setminus W$, i.e., $G\in \mathcal{F}(V)$, where $G$ is the sign vector with $G_0=\{i\colon v_i \in W\}$ and $G_+=\{i\colon v_i \in V\setminus W\}$. In particular, $V$ is pointed iff the empty subset $\emptyset$ is extremal.

For sign vectors $F,G \in \{-1,0,+\}^n$, write $F\leq G$ if $F_+ \subseteq G_+$ and $F_- \subseteq G_-$. As mentioned above, the sets $\mathcal{F}(V)$ and $\mathcal{F}^*(V)$ determine each other by the following lemma. 
\begin{lemma}
\label{lem:Farkas}
Let $F\in \{-1,0,+1\}^n$. Then $F\not\in \mathcal{F}^*(V)$ iff $F\leq G$ for some $G\in \mathcal{F}(V)$, and $F\not\in \mathcal{F}(V)$ iff $F\leq G$ for some $G\in \mathcal{F}^*(V)$
\end{lemma}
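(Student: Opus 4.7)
The two equivalences are exchanged under Gale duality: applying either one to $V^*$ yields the other, using $\mathcal{F}(V)=\mathcal{F}^*(V^*)$ and $\mathcal{F}^*(V)=\mathcal{F}(V^*)$. So it suffices to prove the first, namely that $F\notin\mathcal{F}^*(V)$ iff $F\leq G$ for some $G\in\mathcal{F}(V)$. The easy direction ($\Leftarrow$) is a direct pairing: given $G=(\sgn\langle v_i,u\rangle)_i\in\mathcal{F}(V)$ with $F\leq G$, a hypothetical dependency $\lambda$ realizing $F$ would yield $0=\langle u,\sum_i\lambda_i v_i\rangle=\sum_i\lambda_i\langle v_i,u\rangle$, in which every non-zero summand is strictly positive since $F_+\subseteq G_+$ and $F_-\subseteq G_-$; this rules out $F\neq 0$, and $F=0$ is excluded from $\mathcal{F}^*(V)$ by definition.

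For the hard direction ($\Rightarrow$), I plan to invoke Gordan's theorem of the alternatives for the matrix $A$ whose rows are $v_i^\top$ for $i\in F_+$ and $-v_i^\top$ for $i\in F_-$: any solution $u$ to $Au>0$ immediately produces $G=(\sgn\langle v_i,u\rangle)_i\in\mathcal{F}(V)$ with $F\leq G$, and Gordan guarantees that such $u$ exists iff there is no $x\geq 0$, $x\neq 0$, with $A^\top x=0$. Any such $x$ corresponds, after reinserting signs on $F_-$, to a non-trivial dependency of $V$ with sign pattern $F'\leq F$ belonging to $\mathcal{F}^*(V)$. The whole proof thus reduces to the following \emph{saturation property}: if some $F'\in\mathcal{F}^*(V)\setminus\{0\}$ satisfies $F'\leq F$, then already $F\in\mathcal{F}^*(V)$.

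Saturation is the main technical point, and it is where general position enters crucially. Given a dependency $\lambda'$ realizing $F'$, its support $T=F'_+\cup F'_-$ has size at least $r+1$, since any $r$ vectors of $V$ are linearly independent; in particular $T$ contains a linear basis $B$ of $\R^r$. For each missing coordinate $j\in(F_+\cup F_-)\setminus T$, writing $v_j=\sum_{i\in B}c_i^j v_i$ produces an auxiliary dependency vector $\tau^j\in\R^n$ supported on $\{j\}\cup B$ with $\tau^j_j=F_j$ and $\tau^j_i=-F_j c_i^j$ for $i\in B$. I will then verify that for sufficiently small positive multipliers $\epsilon_j$, the perturbed vector $\lambda=\lambda'+\sum_j\epsilon_j\tau^j$ remains a dependency, turns on each previously missing coordinate with the prescribed sign $F_j$, preserves the signs of $\lambda'$ on $T$ by continuity, and leaves every coordinate in $F_0$ equal to zero, since neither $\lambda'$ nor any $\tau^j$ is supported outside $F_+\cup F_-$. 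This produces a dependency of $V$ with sign pattern exactly $F$, contradicting $F\notin\mathcal{F}^*(V)$ and thereby forcing Gordan's alternative to supply the desired covector $G$.
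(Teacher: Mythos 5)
Your proof is correct and is essentially the paper's argument in linear-programming clothing: Gordan's alternative for the matrix with rows $v_i^\top$ ($i\in F_+$) and $-v_i^\top$ ($i\in F_-$) is exactly the paper's separation of the origin from $\conv(\{v_i\colon i\in F_+\}\cup\{-v_i\colon i\in F_-\})$, and your reduction of the second equivalence to the first via Gale duality matches the paper's. Your explicit saturation step (using general position to find a basis inside the support of a dependency with pattern $F'\leq F$ and perturbing to switch on the missing coordinates) is precisely the point the paper compresses into the assertion that $0$ is a strictly positive combination of all of $Y$ iff $0\in\conv(Y)$, so if anything you have made the one implicit step fully explicit.
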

\begin{proof}
It suffices to prove the first equivalence (the second follows by Gale duality). Set $Y:=\{v_i \colon i\in F_+\} \sqcup \{-v_i \colon i\in F_-\}$. We have $F\in \mathcal{F}^*(V)$ iff the origin $0\in \R^r$ can be written as a linear combination 
$0=\sum_{y \in Y} \lambda_y y$ with all coefficients $\lambda_y>0$, which is the case iff $0$ lies in the convex hull $\conv(Y)$. Thus, $F\not\in \mathcal{F}^*(V)$ iff $0\not\in \conv(Y)$, which means that $Y$ is contained in an open linear halfspace $\{x\in \R^r\colon \langle u,x\rangle >0\}$, for some non-zero $u\in \R^r$; equivalently, the sign vector $G\in \mathcal{F}(V)$ given by $G_i =\sgn(\langle u,v_i\rangle)$ satisfies $F\leq G$.
\end{proof}
\begin{corollary}
$V$ is pointed (equivalently, $f_{0,0}(V)=1$) if and only if $f^*_{n,0}(V)=0$.
\end{corollary}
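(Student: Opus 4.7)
The plan is to identify explicitly which sign vectors are counted by $f_{0,0}(V)$ and by $f^*_{n,0}(V)$, and then read off the equivalences directly from the definition of pointedness together with Lemma~\ref{lem:Farkas}. There is essentially no obstacle — the corollary is a direct bookkeeping consequence of what has already been set up.

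First I would observe that $f_{0,0}(V) = |\{F \in \mathcal{F}(V) \colon |F_0|=0,\ |F_-|=0\}|$. Any such $F$ must be the all-positive sign vector $\mathbf{1} := (+1,\ldots,+1)$, so $f_{0,0}(V) \in \{0,1\}$, and $f_{0,0}(V)=1$ iff $\mathbf{1} \in \mathcal{F}(V)$. By the definition of $\mathcal{F}(V)$ given in Section~\ref{sec:arrangments}, $\mathbf{1} \in \mathcal{F}(V)$ iff there exists a non-zero $u \in \R^r$ with $\langle u, v_i\rangle > 0$ for all $i$, which is exactly the definition of $V$ being pointed. This establishes the parenthetical ``equivalently'' clause in the statement.

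Next I would apply the same reasoning to $f^*_{n,0}(V) = |\{F \in \mathcal{F}^*(V) \colon |F_-|=0,\ |F_+|=n\}|$: the only candidate sign vector is again $\mathbf{1}$, so $f^*_{n,0}(V) \in \{0,1\}$ and $f^*_{n,0}(V)=0$ iff $\mathbf{1} \notin \mathcal{F}^*(V)$. Finally, invoking Lemma~\ref{lem:Farkas} with $F = \mathbf{1}$ gives that $\mathbf{1} \notin \mathcal{F}^*(V)$ iff $\mathbf{1} \leq G$ for some $G \in \mathcal{F}(V)$; but the relation $\mathbf{1} \leq G$ forces $G_+ = [n]$ and hence $G = \mathbf{1}$, so this is equivalent to $\mathbf{1} \in \mathcal{F}(V)$, i.e., to $V$ being pointed. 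Chaining the two equivalences completes the proof.

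The only place where one might stumble is to forget that $f_{0,0}$ and $f^*_{n,0}$ are indicator quantities taking values in $\{0,1\}$; once that is noted the whole argument is one application of Lemma~\ref{lem:Farkas} to a single specific sign vector.
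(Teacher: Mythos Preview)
Your proof is correct and is precisely the argument the paper has in mind: the corollary is stated immediately after Lemma~\ref{lem:Farkas} with no explicit proof, and your reduction to the single sign vector $\mathbf{1}=(+1,\dots,+1)$ and application of that lemma is the intended reading.
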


\begin{lemma} 
\label{lem:char-extremal}
$W \subset V$ is extremal iff there is no $F\in \mathcal{F}^*(V)$ with $F_- \subseteq \{i\colon v_i \in W\}$.
\end{lemma}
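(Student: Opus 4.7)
I would reduce the lemma to Lemma~\ref{lem:Farkas} via the sign vector $F^\bullet \in \{-1, 0, +1\}^n$ defined by $F^\bullet_0 = I_W := \{i : v_i \in W\}$, $F^\bullet_+ = [n] \setminus I_W$, and $F^\bullet_- = \emptyset$. The initial observation I would make is that $W$ is extremal if and only if $F^\bullet \in \mathcal{F}(V)$: a vector $u$ certifying extremality (with $V \setminus W$ in the halfspace $H^+$) realizes exactly the sign pattern $F^\bullet$, and the alternative choice $H^-$ corresponds to $-F^\bullet$, which lies in $\mathcal{F}(V)$ simultaneously by antipodal symmetry.

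For the direction ``$W$ not extremal $\Rightarrow \exists F \in \mathcal{F}^*(V)$ with $F_- \subseteq I_W$'', I would apply Lemma~\ref{lem:Farkas} (second equivalence) directly to $F^\bullet$: since $F^\bullet \notin \mathcal{F}(V)$, there exists $G \in \mathcal{F}^*(V)$ with $F^\bullet \leq G$, and because $F^\bullet_- = \emptyset$ this condition reduces to $[n] \setminus I_W \subseteq G_+$, which in particular forces $G_- \subseteq I_W$; setting $F := G$ does the job.

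For the reverse direction (``$W$ extremal $\Rightarrow$ no such $F$''), I would argue by contradiction via a short pairing. Let $u \in \R^r \setminus \{0\}$ certify extremality ($\langle u, v_i\rangle = 0$ for $i \in I_W$, $\langle u, v_i\rangle > 0$ for $i \notin I_W$), and suppose $F \in \mathcal{F}^*(V)$ satisfies $F_- \subseteq I_W$, witnessed by a nonzero dependency $\lambda$ with $\sum_i \lambda_i v_i = 0$. Pairing yields
\[
0 \;=\; \bigl\langle u, \textstyle\sum_i \lambda_i v_i\bigr\rangle \;=\; \sum_{i \notin I_W} \lambda_i \langle u, v_i\rangle,
\]
a sum of nonnegative terms ($\lambda_i \geq 0$ since $i \notin F_-$ and $\langle u, v_i\rangle > 0$), so every summand vanishes and $\lambda$ is supported on $I_W$. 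But $W$ lies in the hyperplane $u^\perp$, so by general position $|W| \leq r - 1$ and $W$ is linearly independent, contradicting $\lambda \neq 0$. I do not anticipate any serious obstacle; the only subtle point is the two-sided nature of extremality (either open halfspace may contain $V \setminus W$), which is absorbed automatically by the antipodal symmetry of $\mathcal{F}(V)$.
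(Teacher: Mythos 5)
Your proposal is correct and follows essentially the same route as the paper: the reverse direction invokes Lemma~\ref{lem:Farkas} applied to the sign vector with $F_0=\{i\colon v_i\in W\}$ and $F_+=[n]\setminus\{i\colon v_i\in W\}$, and the forward direction is the same pairing of a witnessing vector $u$ against a putative dependence, together with linear independence of $W$ from general position. The only difference is cosmetic (you first force the support of $\lambda$ into $W$ and then contradict independence, while the paper first uses independence to get a strictly positive term), so there is nothing to add.
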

\begin{proof}
Suppose that $W$ is extremal. Let $u\in \R^r$ be a non-zero vector witnessing this, i.e., $\langle u, v\rangle =0$ for $v \in W$ and $\langle u, v\rangle >0$ for $v\in V\setminus W$. By general position, $|W|\leq r-1$ and $W$ is linearly independent. Thus, if $\sum_{v\in V} \lambda_v v=0$ is a non-trivial linear dependence with $\{v \colon \lambda_v<0\}\subseteq W$, then there must be some $v \in V\setminus W$  with $\lambda_v > 0$. But then $0=\langle u,0\rangle = \sum_{v\in V} \lambda_v \langle u,v\rangle =\sum_{v\in V\setminus W} \lambda_v \langle u,v\rangle >0$, a contradiction.

Conversely, if $W$ is not extremal then, by Lemma~\ref{lem:Farkas}, there exists $F \in \mathcal{F}^*(V)$ with $\{i \colon v_i\in V\setminus W\} \subseteq F_+$, i.e., $F_- \subseteq \{i\colon v_i \in W\}$.
\end{proof}

\begin{definition}[{Neighborly and Coneighborly Configurations}] A vector configuration $V\in \R^{r \times n}$ is \emph{coneighborly} if $f_{s,t}(V) = 0$ for $t \leq \lfloor \frac{n-r-1}{2} \rfloor$, i.e., if every open linear halfspace contains at least $\lfloor \frac{n-r+1}{2} \rfloor$ vectors of $V$. 

We say that  $V \subset \R^r$ is \emph{neighborly} if every subset $W \subseteq V$ of size $|W| \leq \lfloor \frac{r-1}{2} \rfloor$ is \emph{extremal}.
\end{definition}

As a direct corollary of Lemma~\ref{lem:char-extremal}, we get:
\begin{corollary} 
\label{cor:neighborly-alternative}
A vector configuration  $V$ is neighborly iff $f^*_{s,t}(V)=0$ for $t \leq \lfloor \frac{r-1}{2} \rfloor$. Thus, $V$ is neighborly iff its Gale dual $V^*$ is coneighborly.
\end{corollary}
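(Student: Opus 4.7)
The plan is to unpack the definition of neighborly via Lemma~\ref{lem:char-extremal} and then apply Gale duality. For the first equivalence, I would argue as follows. By Lemma~\ref{lem:char-extremal}, a subset $W\subseteq V$ fails to be extremal exactly when some $F\in \mathcal{F}^*(V)$ satisfies $F_-\subseteq I_W := \{i\colon v_i\in W\}$. Hence the existence of a non-extremal subset of size at most $\lfloor \frac{r-1}{2}\rfloor$ is equivalent to the existence of some $F\in \mathcal{F}^*(V)$ with $|F_-|\leq \lfloor \frac{r-1}{2}\rfloor$: in one direction, such an $F$ forces $|F_-|\leq |I_W|=|W|\leq \lfloor \frac{r-1}{2}\rfloor$; in the other direction, given $F$ with $|F_-|\leq \lfloor \frac{r-1}{2}\rfloor$, the subset $W:=\{v_i\colon i\in F_-\}$ is a non-extremal witness of size $\leq \lfloor \frac{r-1}{2}\rfloor$.

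Next, since $f^*_{s,t}(V)$ counts precisely those $F\in \mathcal{F}^*(V)$ with $|F_-|=t$ and $|F_+|=s-t$, the absence of any $F$ with $|F_-|\leq \lfloor \frac{r-1}{2}\rfloor$ is exactly the statement $f^*_{s,t}(V)=0$ for every $s$ and every $t\leq \lfloor \frac{r-1}{2}\rfloor$. This yields the first claim.

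For the second claim, I invoke Gale duality, which gives $\mathcal{F}^*(V)=\mathcal{F}(V^*)$ and correspondingly $f^*_{s,t}(V)=f_{n-s,t}(V^*)$. Since $V^*\in \R^{(n-r)\times n}$ has rank $r^*:=n-r$, the definition of coneighborly applied to $V^*$ asks that $f_{s,t}(V^*)=0$ for all $t\leq \lfloor \frac{n-r^*-1}{2}\rfloor=\lfloor \frac{r-1}{2}\rfloor$, which (ranging over $s$) is precisely the condition obtained in the previous paragraph. Combining the two equivalences finishes the proof.

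I do not expect a real obstacle: the argument is essentially combinatorial bookkeeping on top of Lemma~\ref{lem:char-extremal} together with the identity $\mathcal{F}^*(V)=\mathcal{F}(V^*)$. The only step requiring some care is the index translation under Gale duality, namely verifying that $\lfloor \frac{n-r^*-1}{2}\rfloor = \lfloor \frac{r-1}{2}\rfloor$ when $r^*=n-r$, so that the coneighborly threshold for $V^*$ matches the neighborly threshold for $V$.
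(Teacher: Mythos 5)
Your proof is correct and follows exactly the route the paper intends: the corollary is stated there as a direct consequence of Lemma~\ref{lem:char-extremal}, unpacked just as you do (a non-extremal set of size at most $\lfloor\frac{r-1}{2}\rfloor$ exists iff some $F\in\mathcal{F}^*(V)$ has $|F_-|\leq\lfloor\frac{r-1}{2}\rfloor$), combined with $f^*_{s,t}(V)=f_{n-s,t}(V^*)$ and the rank translation $r^*=n-r$. Your index check $\lfloor\frac{n-r^*-1}{2}\rfloor=\lfloor\frac{r-1}{2}\rfloor$ is the only place needing care, and you handle it correctly.
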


Every neighborly vector configuration $V \subset \R^r$ is pointed, hence corresponds to a point set $S\subset \R^d$, $d=r-1$, and $V$ being neighborly means that every subset of $S$ of size at most $\floor{\frac{r-1}{2}}=\floor{\frac{d}{2}}$ forms a face of the simplicial $d$-polytope $P=\conv(S)$ (which is a \emph{neighborly polytope}). We note that for $r=1,2$ ($d=0,1$) neighborliness is the same as being pointed, and for $r=3,4$ ($d=2,3$) $V$ is neighborly iff the point set $S$ is in convex position.

\begin{example}[{Cyclic and Cocyclic Configurations}]
\label{ex:cyclic-cocyclic}
Let $t_1<t_2<\dots<t_n$ be real numbers and define $v_i:= (1,t_i,t_i^2,\dots,t_i^{r-1}) \in \R^r$. We call $\cyclic(n,r):=\{v_1,\dots v_n\}$ and $\cocyclic(n,r):=\{(-1)^i v_i\colon 1\leq i\leq n\}$ the \emph{cyclic} and \emph{cocyclic} configurations of $n$ vectors in $\R^r$, respectively. Cyclic configurations are neighborly and cocyclic configurations are  
coneighborly \cite[Cor. 0.8]{Ziegler:1995aa}. (Moreover, the combinatorial types of these configurations are independent of the choice of the parameters $t_i$.)
\end{example}

\begin{theorem} 
\label{thm:f-fstar}
Let $V \in \R^{r\times n}$ be a vector configuration in general position. Then the polynomials $f_V(x,y)$ and $f^*_V(x,y)$ determine each other. More precisely,
\begin{equation}
\label{eq:fstar-f}
\hfill
f^*_V(x,y)=(x+y+1)^n - (-1)^r x^n -(x+1)^n f_V(\textstyle -\frac{x}{x+1},\frac{x+y}{x+1})
\hfill
\end{equation}
and
\begin{equation}
\label{eq:f-fstar}
\hfill
f_V(x,y)=(x+y+1)^n - (-1)^{n-r} x^n -(x+1)^n f^*_V(\textstyle -\frac{x}{x+1},\frac{x+y}{x+1})
\hfill
\end{equation}
\end{theorem}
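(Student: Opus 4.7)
The plan is to prove \eqref{eq:fstar-f} by a direct generating-function manipulation combined with Lemma~\ref{lem:Farkas} and a topological Euler-characteristic computation; \eqref{eq:f-fstar} will then follow from \eqref{eq:fstar-f} by Gale duality.

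First, I would rewrite \eqref{eq:fstar-f} in the equivalent form
\[
(x+1)^n\, f_V\!\left(\frac{-x}{x+1},\frac{x+y}{x+1}\right)\;=\;(x+y+1)^n - (-1)^r x^n - f^*_V(x,y),
\]
and expand the left-hand side directly from the definition of $f_V$. A short binomial computation gives the identity $x^{|G_0|}(1+x)^{|G_+|}(x+y)^{|G_-|}=\sum_{F\le G} x^{|F_0|}y^{|F_-|}$ (for each $i\in G_+$, the coordinate $F_i$ ranges over $\{+,0\}$, contributing a factor $1+x$; for $i\in G_-$, over $\{-,0\}$, contributing $x+y$; for $i\in G_0$, $F_i$ is forced to $0$, contributing $x$). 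Substituting and swapping the order of summation gives
\[
(x+1)^n\, f_V\!\left(\frac{-x}{x+1},\frac{x+y}{x+1}\right)\;=\;\sum_{F\in\{-,0,+\}^n} x^{|F_0|}y^{|F_-|}\cdot\sigma(F),
\]
where $\sigma(F):=\sum_{G\in\mathcal{F}(V),\,G\ge F}(-1)^{|G_0|}$. Since $(x+y+1)^n=\sum_F x^{|F_0|}y^{|F_-|}$, it suffices to show that $\sigma(0)=1-(-1)^r$, $\sigma(F)=1$ for $F\ne 0$ with $F\notin\mathcal{F}^*(V)$, and $\sigma(F)=0$ for $F\in\mathcal{F}^*(V)$.

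The key step is to interpret $\sigma(F)$ topologically. Consider the open spherical region
\[
R_F^\circ:=\{u\in S^d:\langle u,v_i\rangle>0\text{ for }i\in F_+,\ \langle u,v_i\rangle<0\text{ for }i\in F_-\}.
\]
The faces of $\arr(V)$ whose sign vectors $G$ satisfy $G\ge F$ partition $R_F^\circ$ into cells of dimension $d-|G_0|$, so $\sigma(F)=(-1)^d\,\chi_c(R_F^\circ)$, where $\chi_c$ denotes the Euler characteristic with compact supports. By Lemma~\ref{lem:Farkas}, $R_F^\circ=\emptyset$ exactly when $F\in\mathcal{F}^*(V)$, handling that case. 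For $F=0$ we have $R_F^\circ=S^d$, so $\chi_c=1+(-1)^d$ and $\sigma(0)=(-1)^d+1=1-(-1)^r$ (using $r=d+1$). For the remaining case ($F\ne 0$ and $R_F^\circ\ne\emptyset$), no antipodal pair $u,-u$ can both lie in $R_F^\circ$ (this would contradict the strict inequalities on $F_+\cup F_-$), so the positive cone over $R_F^\circ$ in $\R^r$ is a pointed open convex cone; hence $R_F^\circ$ is contained in an open hemisphere and, via gnomonic projection, is homeomorphic to a nonempty open convex subset of $\R^d$, hence to $\R^d$ itself. Therefore $\chi_c(R_F^\circ)=(-1)^d$ and $\sigma(F)=1$.

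Finally, \eqref{eq:f-fstar} follows by applying \eqref{eq:fstar-f} to the Gale dual $V^*$, which has rank $n-r$ and satisfies $f_{V^*}=f^*_V$ and $f^*_{V^*}=f_V$ by the definitions recalled in Section~\ref{sec:Gale}. The main subtlety in this proof is the topological claim that a nonempty open intersection of open hemispheres in $S^d$ (with $F\ne 0$) is homeomorphic to $\R^d$; the pointed-convex-cone argument above is the cleanest way to justify this, and everything else is essentially bookkeeping.
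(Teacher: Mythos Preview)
Your proof is correct and follows essentially the same approach as the paper: both arguments reduce to computing $\sigma(F)=\sum_{G\in\mathcal{F}(V),\,G\ge F}(-1)^{|G_0|}$ via the Euler characteristic of the open region $R_F^\circ$, using Lemma~\ref{lem:Farkas} to identify when this region is empty and the same binomial identity to swap the order of summation. The only cosmetic differences are that the paper works with a homogeneous trivariate version $p(x,y,z)$ of $f_V$ and phrases the open-ball Euler characteristic as $\chi(B^d)-\chi(S^{d-1})$ rather than as $\chi_c(\R^d)$; your pointed-cone/gnomonic-projection justification for the homeomorphism $R_F^\circ\cong\R^d$ is in fact slightly more explicit than the paper's.
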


\begin{proof} By Gale duality, it suffices to prove \eqref{eq:fstar-f} (since $f^*_V(x,y)=f_{V^*}(x,y)$).

For this proof, it will be convenient to work with a homogeneous version of both polynomials obtained by associating with each sign vector $F\in \{-1,0,+1\}^n$ the monomial $x^{|F_+|}y^{|F_-|}z^{|F_0|}$. Formally, 
define
\[
\fhom(x,y,z):=\sum_{F\in \mathcal{F}(V)} x^{|F_+|}y^{|F_-|}z^{|F_0|}, \quad \textrm{and}\quad 
\fhom^*(x,y,z):=\sum_{F\in \mathcal{F}^*(V)} x^{|F_+|}y^{|F_-|}z^{|F_0|} 
\]
Using the abridged notation $f=f_V$ and $f^*=f^*_V$, 
we get 
\begin{equation}
\label{eq:inhom-hom}
\begin{split}
\textstyle \fhom(x,y,z) = x^n f(\frac{z}{x}, \frac{y}{x}),\quad &f(z,y)=\fhom(1,y,z), \\
\textstyle \fhom^*(x,y,z) = x^n f^*(\frac{z}{x},\frac{y}{x}),\quad &f^*(z,y)=\fhom^*(1,y,z)
\end{split}
\end{equation}
We start with the observation that $\sum_{F\in \{-1,0,+1\}^n} x^{|F_+|} y^{|F_-|} z^{|F_0|}= (x+y+z)^n$. Therefore, 
\begin{equation}
\label{eq:fstarcomplement}
\fhom^*(x,y,z) =(x+y+z)^n - \sum_{F \not\in \mathcal{F}^*(V)} x^{|F_+|} y^{|F_-|} z^{|F_0|}
\end{equation}
We will show that
\begin{equation}
\label{eq:fpointedcone}
\sum_{F \not\in \mathcal{F}^*(V)} x^{|F_+|} y^{|F_-|} z^{|F_0|} = p(x+z,y+z,-z) -(-1)^d z^n
\end{equation}
Together with \eqref{eq:fstarcomplement}, this implies $\fhom^*(x,y,z) = (x+y+z)^n -(-1)^rz^n - \fhom(x+z,y+z,-z)$, which by \eqref{eq:inhom-hom} yields $f^*(z,y)=(1+y+z)^n -(-1)^r z^n - (1+z)^n f(\frac{-z}{1+z},\frac{y+z}{1+z})$. This implies \eqref{eq:fstar-f} and hence the theorem (by substituting $x$ for $z$). Thus, it remains to show \eqref{eq:fpointedcone}.

Recall that for sign vectors $F,G \in \{-1,0,+1\}^n$, we write $F\leq G$ iff $F_+ \subseteq G_+$ and $F_- \subseteq G_-$. 
We observe that, for every $G \in \{-1,0,+1\}^n$,
\begin{equation}
\label{eq:superfaces}
\sum_{F\in \{-1,0,+1\}^n, F\leq G} x^{|F_+|} y^{|F_-|} z^{|F_0|}=(x+z)^{|G_+|} (y+z)^{|G_-|}z^{|G_0|}
\end{equation}
Consider a sign vector $F\in \{-1,0,+1\}^n$. As we saw in the proof of Lemma~\ref{lem:Farkas}, we have $F\not \in \mathcal{F}^*(V)$ iff the set of vectors $\{v_i \colon i\in F_+\} \sqcup \{-v_i \colon i\in F_-\}$ is contained in an open linear halfspace. Passing to the polar dual arrangement $\arr(V)$, this means that the intersection  $C:=(\bigcap_{i \in F_+} H_i^+) \cap (\bigcap_{i \in F_-} H_i^-)$ of open hemispheres is non-empty.
Every $G\in \mathcal{F}(V)$ corresponds to a face of $\arr(V)$ of dimension $d-|G_0|$; the relative interior of this face is contained in $C$ iff $F\leq G$, and $C$ is the disjoint union of the relative interiors of these faces. 

If $F\neq 0$, then $C$ is a non-empty intersection of a non-empty collection of open hemispheres, and hence (as a non-empty, spherically convex, open region) homeomorphic to a $d$-dimensional open ball $\mathring{B}^d$ (a spherical polytope minus its boundary). By computing Euler characteristics as alternating sums of face numbers, we get
\[
\sum_{G\in \mathcal{F}(V), G\geq F} (-1)^{d-|G_0|}=\chi(B^d)-\chi(S^{d-1})=1-(1-(-1)^d)=(-1)^d
\]
hence $\sum_{G\in \mathcal{F(V)}, G\geq F}(-1)^{|G_0|}=1$ for all $F\neq 0$. 

On the other hand, if $F = 0$, then $C=S^d$, hence
\[
\sum_{G\in \mathcal{F}(V), G\geq 0} (-1)^{d-|G_0|}=\chi(S^d)=1+(-1)^d
\]
hence $\sum_{G\in \mathcal{F}(V), G\geq 0}(-1)^{|G_0|}=1+(-1)^d=\chi(S^d)$.
By combining this with \eqref{eq:superfaces}, we get
\begin{eqnarray*}
\sum_{F \not\in \mathcal{F}^*(V)} x^{|F_+|} y^{|F_-|} z^{|F_0|} &= & \sum_{F \not\in \mathcal{F}^*(V)} x^{|F_+|} y^{|F_-|} z^{|F_0|}\left( \sum_{G\in \mathcal{F}(V), G\geq F} (-1)^{|G_0|} \right)-(-1)^d z^n \\
&= & \underbrace{\sum_{G\in \mathcal{F}(V)} (x+z)^{|G_+|} (y+z)^{|G_-|} (-z)^{|G_0|}}_{=p(x+z,y+z,-z)}-(-1)^d z^n
\end{eqnarray*}
as we wanted to show.
\end{proof}

\section{Continuous Motion and the $g$-Matrix}
\label{sec:g-matrix}

\subsection{The $g$-Matrix of a Pair}
\label{sec:g-pair}

Any two configurations $V=\{v_1,\dots,v_n\}$ and $W=\{w_1,\dots,w_n\}$ of $n$ vectors in general position in $\R^r$ can be deformed into one another through a continuous family $V(t)=\{v_1(t),\dots,v_n(t)\}$ of vector configurations, where $v_i(t)$ describes a continuous path from $v_i(0)=v_i$ to $v_i(1)=w_i$ in $\R^r$. If we choose this continuous motion sufficiently generically, then there is only a finite set of events $0 < t_1 < \dots < t_N < 1$, called \emph{mutations}, during which the combinatorial type of $V(t)$ (encoded by $\mathcal{F}(V(t))$) changes, 
in a controlled way: during a mutation, a unique $r$-tuple of vectors in $V(t)$, indexed by some $R=\{i_1,\dots,i_r\} \subset [n]$, becomes linearly dependent, the orientation of the $r$-tuple (i.e., the sign of 
$\det[v_{i_1}|\dots | v_{i_r}]$) changes, and all other $r$-tuples of vectors remain linearly independent. Thus, any two vector configurations are connected by a finite sequence $V=V_0,V_1,\dots,V_N=W$ such that $V_{s-1}$ and $V_s$ differ by a mutation, $1\leq s\leq N$. 
We describe the change from $\mathcal{F}(V)$ to $\mathcal{F}(W)$ when $V$ and $W$ differ by a single mutation. Let $R\in \binom{[n]}{r}$ index the unique $r$-tuple of vectors that become linearly dependent. In terms of the polar dual arrangements, the $r$-tuple of great $(d-1)$-spheres $H_i$, $i\in R$, intersect in an antipodal pair $u,-u$ of points in $S^d$. Immediately before and immediately after the mutation, these $r$ great $(d-1)$-spheres bound an antipodal pair of simplicial $d$-faces $\sigma,-\sigma$ in $\mathcal{A}(V)$ and a corresponding pair of simplicial $d$-faces $\tau,-\tau$ in $\mathcal{A}(W)$, respectively; see Figures~\ref{fig:0k3k} and \ref{fig:1k2k} for an illustration in the case $d=2$. We have $F\in \mathcal{F}(V)\setminus \mathcal{F}(W)$ iff the face of $\arr(V)$ with signature $F$ is contained in $\sigma$ or $-\sigma$, and $F\in \mathcal{F}(W)\setminus \mathcal{F}(V)$ iff the face of $\arr(W)$  with signature $F$ is contained in $\tau$ or $-\tau$. All other faces are preserved, i.e., they belong to $\mathcal{F}(V) \cap \mathcal{F}(W)$. 

Let $Y\in \mathcal{F}(W)$ be the signature of $\tau$. We define a partition $[n] = I \sqcup J \sqcup A \sqcup B$ by 
\[I:=R \cap Y_+, \quad J:=R \cap Y_-, \quad A:=([n]\setminus R) \cap Y_+ ,\qquad B:=([n]\setminus R) \cap Y_-\]
Define $j:=|J|$ and $k:=|B|$. We call the pair $(j,k)$ the \emph{type} of the simplicial face $\tau$. The signature $X\in \mathcal{F}(V)$ of the corresponding simplicial face $\sigma$ of $\arr(V)$ satisfies $X_i = -Y_i$ for $i \in R$ and $X_i = Y_i$ for $i\in [n]\setminus R$. Thus, $\sigma$ is of type $(r-j,k)$. Analogously, $-\tau$ and $-\sigma$ are of type $(r-j,n-r-k)$ and $(j,n-r-k)$, respectively, see Figures~\ref{fig:0k3k} and \ref{fig:1k2k}. 

Let us define $f_\sigma(x,y):=\sum_{F\subseteq \sigma}  x^{|F_0|}y^{|F_-|}$, 
where we use the notation $F\subseteq \sigma$ to indicate that the sum ranges over all $F \in \mathcal{F}(V)$ corresponding to faces of $\arr(V)$ contained in $\sigma$. The polynomials $f_{-\sigma}(x,y)$,  $f_\tau(x,y)$, and $f_{-\tau}(x,y)$ are defined analogously. These four polynomials have a simple form:
\begin{eqnarray*}
& f_\sigma (x,y)=y^k\left[(x+1)^{j}(x+y)^{r-j}-x^r\right],\quad f_{-\sigma}(x,y)=y^{n-r-k}\left[(x+1)^{r-j}(x+y)^{j}-x^r\right] & \\
& f_\tau(x,y)=y^k\left[(x+1)^{r-j}(x+y)^{j}-x^r\right],\quad f_{-\tau}(x,y)=y^{n-r-k}\left[(x+1)^{j}(x+y)^{r-j}-x^r\right]
\end{eqnarray*}
We say that the mutation $V\to W$ is of \emph{Type~$(j,k)\equiv(r-j,n-r-k)$}. The reverse mutation $W\to V$ is of Type~$(r-j,k)\equiv(j,n-r-k)$.
We can summarize the discussion as follows:
\begin{lemma}
\label{lem:mutation}
Let $V\to W$ be a mutation of Type~$(j,k)\equiv(r-j,n-r-k)$ between configurations of $n$ vectors in $\R^r$. Then
\begin{equation}
\label{eq:f-g-mutation}
f_W(x,y)-f_V(x,y) = \left(y^k - y^{n-r-k}\right)\left[(x+1)^{r-j}(x+y)^{j} - (x+1)^{j}(x+y)^{r-j}\right]
\end{equation}
\end{lemma}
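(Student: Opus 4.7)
The plan is to compute $f_W(x,y)-f_V(x,y)$ by summing only over the faces that change during the mutation. As discussed in the setup immediately preceding the lemma, these are exactly the subfaces of $\sigma$ and $-\sigma$ (lost, in $\arr(V)$) and of $\tau$ and $-\tau$ (gained, in $\arr(W)$), so
\[
f_W(x,y)-f_V(x,y) \;=\; \bigl(f_\tau(x,y) + f_{-\tau}(x,y)\bigr) \;-\; \bigl(f_\sigma(x,y) + f_{-\sigma}(x,y)\bigr).
\]
The task therefore reduces to verifying the four closed-form expressions for $f_\sigma, f_{-\sigma}, f_\tau, f_{-\tau}$ stated just above the lemma, and then simplifying their alternating sum.

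For the closed forms, I would enumerate the faces in the closure of $\tau$ directly. Since $\tau$ is a simplicial $d$-face bounded by $\{H_i : i\in R\}$, every such face corresponds uniquely to a proper subset $S\subsetneq R$ of indices on whose bounding hyperplane the face lies; its signature $F$ satisfies $F_i=0$ for $i\in S$, $F_i=Y_i$ for $i\in R\setminus S$, and $F_i=Y_i$ for $i\notin R$. The case $S=R$ is excluded because, in general position, $\bigcap_{i\in R}H_i=\emptyset$ in $\arr(W)$. Splitting $S = S_I\sqcup S_J$ with $S_I\subseteq I$ and $S_J\subseteq J$, we have $|F_0|=|S|$ and $|F_-|=|B|+|J\setminus S|$, so
\[
f_\tau(x,y) \;=\; y^{|B|}\sum_{S_I\subseteq I}x^{|S_I|}\sum_{S_J\subseteq J}x^{|S_J|}y^{|J|-|S_J|} \;-\; y^{|B|}x^r \;=\; y^k\bigl[(1+x)^{r-j}(x+y)^j - x^r\bigr].
\]
The expression for $f_\sigma$ follows identically once one notes that the signature of $\sigma$ equals $Y$ outside $R$ but is negated on $R$, so the roles of $I$ and $J$ swap while $|B|=k$ remains; the expressions for $f_{-\sigma}$ and $f_{-\tau}$ come from applying the antipodal map, which replaces $k$ by $n-r-k$.

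The final step is the algebraic simplification of $(f_\tau+f_{-\tau})-(f_\sigma+f_{-\sigma})$. The four $-x^r$ correction terms cancel in pairs, and the surviving terms group according to the prefactor $y^k$ versus $y^{n-r-k}$, producing precisely the right-hand side of \eqref{eq:f-g-mutation}. The only genuinely nontrivial ingredient, and the one I would regard as the main obstacle, is the claim preceding the lemma that identifies $\mathcal{F}(V)\setminus\mathcal{F}(W)$ with the subfaces of $\sigma\cup(-\sigma)$ and $\mathcal{F}(W)\setminus\mathcal{F}(V)$ with those of $\tau\cup(-\tau)$: one must argue that each proper subface of $\sigma$ really does disappear under the mutation rather than being merely relabeled (its signs on $R$ differ from those of the corresponding subface of $\tau$, since $X_R=-Y_R$), and that no face outside the four simplices is affected. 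Once this topological fact about mutations is in hand, what remains is the counting exercise on a single spherical $d$-simplex sketched above together with routine algebra.
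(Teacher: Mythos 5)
Your proof is correct and takes essentially the same route as the paper: the paper's proof of Lemma~\ref{lem:mutation} is exactly the discussion preceding it, which asserts the same identification of the changed faces with the subfaces of $\sigma,-\sigma,\tau,-\tau$, records the same four closed-form polynomials $f_\sigma,f_{-\sigma},f_\tau,f_{-\tau}$, and takes the alternating sum. Your enumeration over subsets $S\subsetneq R$ merely fills in the face count on a spherical $d$-simplex that the paper states without detail, and the topological fact you flag is likewise taken as part of the description of a mutation there.
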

Note that the right-hand side of \eqref{eq:f-g-mutation} is zero if $2j=r$ or $2k=n-r$.

 \begin{figure}[ht]
\begin{center}
\includegraphics[scale=0.7]{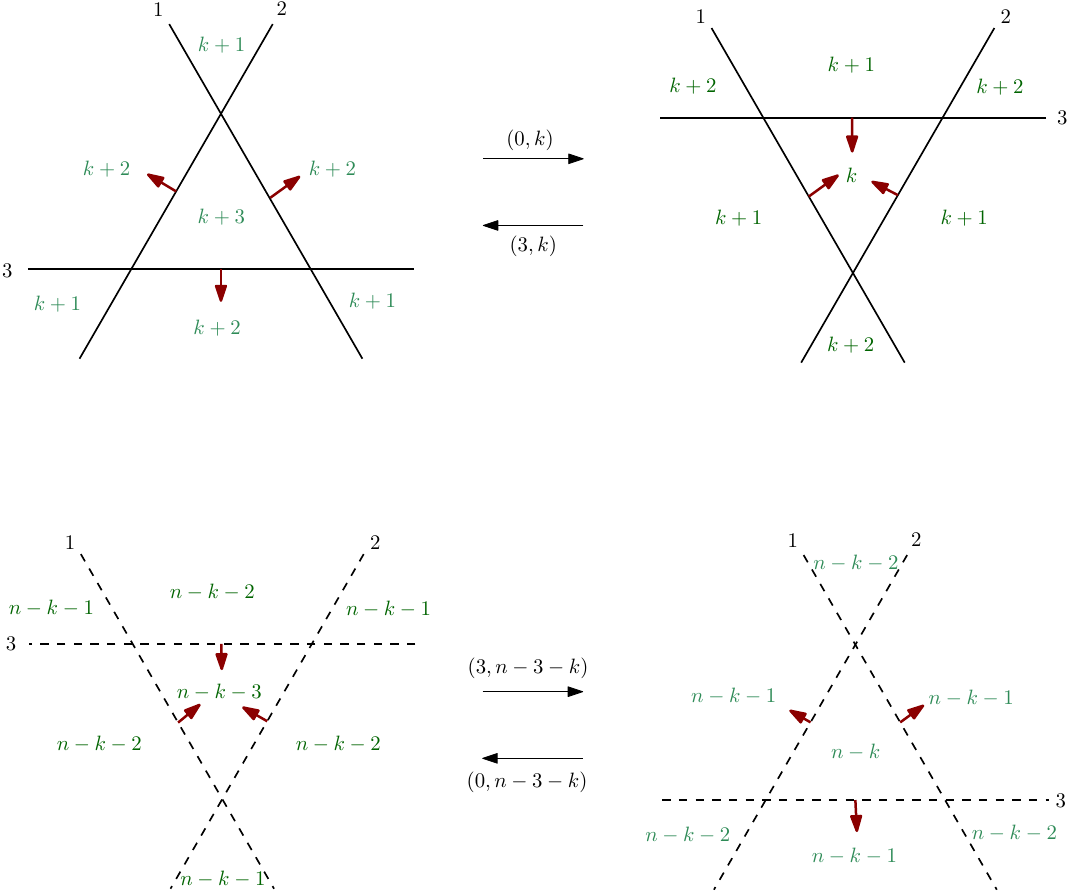}
\end{center}
\caption{A mutation of Type $(0, k) \equiv (3, n-3-k)$ (from left to right), respectively $(3, k)\equiv(0, n-3-k)$ (from right to left) in $S^2$. The upper row shows the triangular faces $\sigma$ and $\tau$ before and after the mutation, and the lower row shows the corresponding antipodal faces $-\sigma$ and $-\tau$. The little arrows indicate positive halfspaces, and the labels in full-dimensional faces indicate levels.}
\label{fig:0k3k}
\end{figure}

\begin{figure}[ht]
\begin{center}
\includegraphics[scale=0.7]{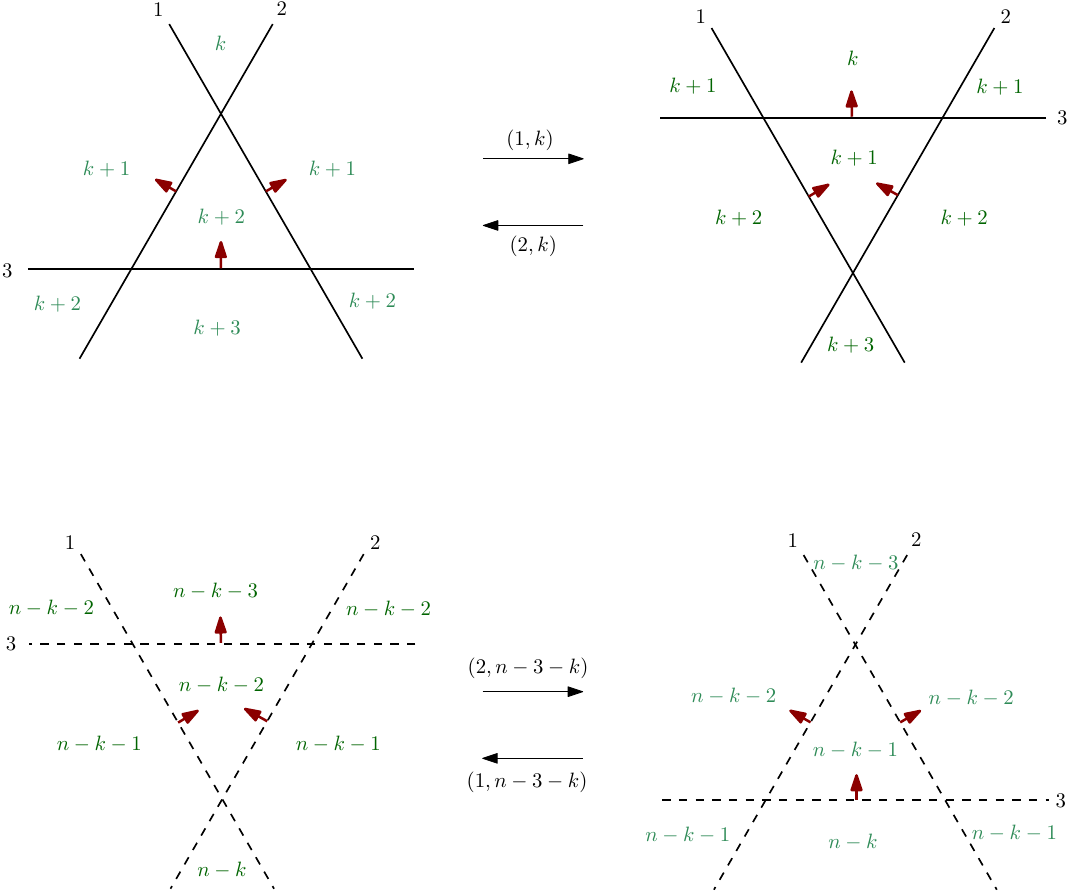}
\end{center}
\caption{A mutation of Type $(1, k)\equiv (2, n-3-k)$ (from left to right), respectively $(2, k)\equiv(1, n-3-k)$ (from right to left) in $S^2$.}
\label{fig:1k2k}
\end{figure}

We are now ready to define the $g$-matrix $g(V\rightarrow W)$ of a pair of vector configurations.  
\begin{definition}[$g$-Matrix of a pair]
\label{def:g-pair}
Let $V,W$ be configurations of $n$ vectors in $\R^r$. 

If $V\to W$ is a single mutation of Type~$(i,\ell)\equiv(r-i,n-r-\ell)$ then we define the $g$-matrix 
$g(V\rightarrow W)=[g_{j,k}(V\rightarrow W)]$, $0\leq j\leq r$ and $0\leq k \leq n-r$, as follows: 

If $2i=r$ or $2\ell=n-r$, then $g_{j,k}(V\to W)=0$ for all $j,k$. If $2i\neq r$ and $2\ell\neq n-r$, then
\[
g_{j,k}(V\to W):= 
\begin{cases}
  +1 & \textrm{if } (j,k)=(i,\ell) \textrm{ or }(j,k)=(r-i,n-r-\ell)\\
  -1 & \textrm{if } (j,k)=(r-i,\ell) \textrm{ or }(j,k)=(i,n-r-\ell)\\
  0 & \textrm{else.}  
\end{cases}
\]

More generally, if $V$ and $W$ are connected by a sequence $V=V_0, V_1,\dots, V_N=W$, where $V_{s-1}$ and $V_s$ differ by a single mutation, then we define
\[
g_{j,k}(V\to W):= \sum_{s=1}^N g_{j,k}(V_{s-1}\to V_s)
\]
\end{definition}

\begin{proof}[Proof of Thm~\ref{thm:f-g}]
All three properties follow directly from Definition~\ref{def:g-pair} and Lemma~\ref{lem:mutation}.
\end{proof}

A priori, it may seem that the definition of the $g$-matrix depends on the choice of a particular sequence of mutations transforming $V$ to $W$. However, this is not the case: 

\begin{lemma} 
\label{lem:g-f}
Let $f(x,y)=\sum_{s,t}f_{s,t}x^sy^t$ and $g(x,y)=\sum_{j,k} g_{j,k} x^j y^k$ be polynomials (with real coefficients $f_{s,t}$ and $g_{j,k}$ that are zero unless $0\leq s\leq d$ and $0\leq t\leq n$, respectively $0\leq j\leq r$ and $0\leq k\leq n-r$). Suppose that $f(x,y)$ and $g(x,y)$ satisfy the identity
\begin{equation}
\label{eq:f-g-to-be-inverted} 
f(x,y)= (1+x)^r g (\textstyle \frac{x+y}{1+x},y)=\sum_{j=0}^r \sum_{k=0}^{n-r} g_{j,k}\cdot (x+y)^j (1+x)^{r-j} y^k
\end{equation}
Then, for every fixed $t$, the numbers $g_{j,t}$, $0\leq j\leq r$, are linear combinations of the numbers $f_{s,\ell}$, $0\leq s\leq d$ and $0\leq \ell \leq t$, with coefficients given inductively by the polynomial equations 
\[
\sum_j g_{j,t} x^{r-j} = \sum_s f_{s,t} (x-1)^s - \sum_j \sum_{k<t} g_{j,k}\binom{j}{t-k}x^{r-j}
\]
\end{lemma}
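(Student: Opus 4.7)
The strategy is to invert the linear transformation from $g$ to $f$ defined by \eqref{eq:f-g-to-be-inverted} layer-by-layer in the $y$-degree $t$, using induction on $t$. The crucial observation is that when we compare coefficients of $y^t$ on both sides of \eqref{eq:f-g-to-be-inverted}, the ``fresh'' unknowns $g_{0,t},\ldots,g_{r,t}$ appear linearly with coefficients $x^j(1+x)^{r-j}$, while all other terms involve only previously-determined $g_{j,k}$ with $k<t$.

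First, I extract the coefficient of $y^t$ in \eqref{eq:f-g-to-be-inverted}. Using $(x+y)^j y^k=\sum_{i=0}^j\binom{j}{i}x^{j-i}y^{i+k}$, the contribution of a single monomial $g_{j,k}(x+y)^j(1+x)^{r-j}y^k$ to the coefficient of $y^t$ is $g_{j,k}\binom{j}{t-k}x^{j-t+k}(1+x)^{r-j}$ (with the binomial coefficient vanishing outside $0\leq t-k\leq j$). Separating the layer $k=t$ from the layers $k<t$ yields the polynomial identity
\[
\sum_s f_{s,t}\,x^s \;=\; \sum_{j=0}^r g_{j,t}\,x^j(1+x)^{r-j} \;+\; \sum_{k<t}\sum_{j=0}^r g_{j,k}\binom{j}{t-k}\,x^{j-t+k}(1+x)^{r-j},
\]
and the lemma's displayed equation is exactly this identity after the substitution $x\mapsto x-1$, which sends $x^j(1+x)^{r-j}$ to $(x-1)^j x^{r-j}$.

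Next, I verify the only non-formal ingredient: the $r+1$ polynomials $\{x^j(1+x)^{r-j}\}_{j=0}^r$ form a basis of the space of polynomials of degree at most $r$. Since $x^j(1+x)^{r-j}$ has a zero of exact order $j$ at $x=0$ and exact order $r-j$ at $x=-1$, unique factorization in $\R[x]$ rules out any non-trivial linear relation among them; a dimension count then gives the basis claim. Equivalently, the change-of-basis matrix from $\{x^i\}$ to $\{x^j(1+x)^{r-j}\}$ is triangular with entries $\binom{r-j}{i-j}$ and unit diagonal, hence invertible.

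The rest is induction on $t$. The base $t=0$ has an empty second sum, so the $g_{j,0}$ are uniquely determined by decomposing $\sum_s f_{s,0}\,x^s$ in the basis from the previous step. Assuming the claim for all $k<t$, the double sum in the identity is a known polynomial in $x$; subtracting it from $\sum_s f_{s,t}\,x^s$ leaves $\sum_j g_{j,t}\,x^j(1+x)^{r-j}$, and the basis property recovers each $g_{j,t}$ as the promised linear combination of $\{f_{s,\ell}: 0\leq s\leq d,\ 0\leq \ell\leq t\}$. The main (small) obstacle is the basis claim above, which is standard once phrased in terms of orders of vanishing; everything else is binomial bookkeeping.
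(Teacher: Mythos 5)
Your derivation follows essentially the same route as the paper's proof: extract the coefficient of $y^t$ from \eqref{eq:f-g-to-be-inverted}, peel off the already-determined layers $k<t$, and invert the $k=t$ layer by induction on $t$, using that the polynomials $x^j(1+x)^{r-j}$, $0\le j\le r$, form a basis of the space of polynomials of degree at most $r$ (the paper encodes this basis change via the substitution $x\mapsto x-1$, while you justify it directly by vanishing orders, resp.\ triangularity). The substantive conclusion of the lemma --- that for each fixed $t$ the numbers $g_{j,t}$ are determined, inductively and linearly, by the $f_{s,\ell}$ with $0\le \ell\le t$ --- is correctly established, and your coefficient extraction $g_{j,k}\binom{j}{t-k}x^{j-t+k}(1+x)^{r-j}$ is the right one.

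The one point that needs fixing is your sentence asserting that your identity ``is exactly the lemma's displayed equation after the substitution $x\mapsto x-1$.'' It is not: substituting $x\mapsto x-1$ into your identity gives
\[
\sum_j g_{j,t}\,(x-1)^j x^{r-j} \;=\; \sum_s f_{s,t}\,(x-1)^s \;-\; \sum_j\sum_{k<t} g_{j,k}\binom{j}{t-k}(x-1)^{j-t+k}x^{r-j},
\]
whereas the lemma's display (and the paper's own proof, which claims the coefficient of $y^t$ in $(x+y)^j(x+1)^{r-j}y^k$ is $\binom{j}{t-k}(x+1)^{r-j}$) omits exactly the factors $x^{j-t+k}$, resp.\ $(x-1)^{j-t+k}$, that you correctly retain. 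The omission is not harmless notation: for $r=1$, $n=2$, $g_{0,0}=1$, $g_{1,0}=-1$, $g_{0,1}=g_{1,1}=0$ one gets $f(x,y)=1-y$, and the lemma's recursion at $t=0$ reads $g_{0,0}x+g_{1,0}=1$, yielding $(0,1)$ instead of $(1,-1)$, while the corrected version above checks out. So the discrepancy is a typo in the paper's formula rather than an error in your computation, but as written your matching claim is false; either state the corrected recursion explicitly (as above) or note the discrepancy. With that repaired, the rest of your argument --- the basis claim and the induction on $t$ --- is sound and proves the (corrected) statement.
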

\begin{proof}
The coefficient of $y^t$ in $(x+y)^j (x+1)^{r-j}y^k$ equals $\binom{j}{t-k}(x+1)^{r-j}$ (which is zero unless $0\leq k\leq t$). Thus, fixing $t$ and collecting terms in \eqref{eq:f-g-to-be-inverted} according to $y^t$, we get
\[
\sum_s f_{s,t} x^s =\sum_j \sum_{k\leq t} g_{j,k}\binom{j}{t-k}(1+x)^{r-j}
\]
Moving the terms with $k<t$ to the other side yields
\[
\sum_j g_{j,t} (1+x)^{r-j} = \sum_s f_{s,t} x^s - \sum_j \sum_{k<t} g_{j,k}\binom{j}{t-k}(1+x)^{r-j}
\]
The result follows by a change of variable from $x$ to $x-1$ (inductively, the numbers $g_{j,k}$, $k<t$, are determined by the numbers $f_{s,\ell}$, $\ell<t$.)
\end{proof}

\begin{proof}[Proof of Theorem~\ref{thm:f*-g*}] 
By Theorem~\ref{thm:f-fstar}, the $f$-polynomial and the $f^*$-polynomial of a vector configuration determine each other. By combining this with Thm.~\ref{thm:f-g}, Theorem~\ref{thm:f*-g*} follows.\end{proof}

We remark that Theorem~\ref{thm:f*-g*} can also be proved directly, by studying how $\mathcal{F}^*$ changes during mutations. By Gale duality, Thms.~\ref{thm:f-g} and \ref{thm:f*-g*} also imply the following:

\begin{corollary}
\label{cor:g-pair-dual}
Let $V,W\in \R^{r\times n}$ be vector configurations, and let $V^*,W^*\in  \R^{(n-r)\times n}$ be their Gale duals. Then $g_{j,k}(V \to W) = -g_{k,j}(V^* \to W^*)$.
\end{corollary}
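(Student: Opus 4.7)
The plan is to derive the identity purely algebraically by combining the two polynomial expansions provided by Theorems~\ref{thm:f-g} and \ref{thm:f*-g*} with the Gale-duality identity $f^*_U(x,y) = f_{U^*}(x,y)$ (valid for any vector configuration $U$). The point is that both theorems give formulas for changes of face numbers, but expressed using the rank-$r$ basis polynomials $(x+y)^j (1+x)^{r-j} y^k$ versus the rank-$(n-r)$ basis polynomials $(x+y)^k (1+x)^{n-r-k} y^j$; Gale duality ``transposes'' the roles of these two sets of basis polynomials, which is exactly the transposition $(j,k) \leftrightarrow (k,j)$ appearing in the claim.

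Concretely, I would proceed as follows. Using Theorem~\ref{thm:f*-g*} together with the sign-reversal $g(W \to V) = -g(V \to W)$ from Theorem~\ref{thm:f-g}(3), I first rewrite
\[
f^*_W(x,y) - f^*_V(x,y) = -\sum_{j,k} g_{j,k}(V \to W)\,(x+y)^k (1+x)^{n-r-k} y^j.
\]
Next, by $f^*_V(x,y) = f_{V^*}(x,y)$ (and the analogous identity for $W$), the left-hand side equals $f_{W^*}(x,y) - f_{V^*}(x,y)$. Since $V^*, W^* \in \R^{(n-r)\times n}$ are vector configurations of rank $n-r$, Theorem~\ref{thm:f-g} applied to the pair $(V^*,W^*)$ yields a second expression
\[
f_{W^*}(x,y) - f_{V^*}(x,y) = \sum_{j',k'} g_{j',k'}(V^* \to W^*)\,(x+y)^{j'} (1+x)^{(n-r)-j'} y^{k'},
\]
with $0 \leq j' \leq n-r$ and $0 \leq k' \leq r$.

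Finally, reindexing the first sum by setting $j' := k$ and $k' := j$ makes the two expansions use the very same basis polynomials. Lemma~\ref{lem:g-f}, applied with rank parameter $n-r$, asserts that the linear map sending a $g$-matrix to the corresponding $\Delta f$-polynomial is injective in that basis; matching coefficients therefore gives $g_{j',k'}(V^* \to W^*) = -g_{k',j'}(V \to W)$, which is the asserted identity. There is no serious obstacle: the argument is symbolic bookkeeping, and the only point requiring attention is that the injectivity of Lemma~\ref{lem:g-f} be invoked with the correct rank parameter $(n-r)$ rather than $r$, which is legitimate since we are applying Theorem~\ref{thm:f-g} to the Gale duals.
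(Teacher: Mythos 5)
Your argument is correct and is exactly the route the paper intends: the paper derives the corollary by combining Theorem~\ref{thm:f-g} (applied to the Gale-dual pair), Theorem~\ref{thm:f*-g*}, and the identity $f^*_U=f_{U^*}$, with uniqueness of the $g$-matrix (Lemma~\ref{lem:g-f}, with rank parameter $n-r$) justifying the coefficient comparison. Your write-up simply fills in the bookkeeping that the paper leaves implicit, including the correct observation that injectivity must be invoked for rank $n-r$.
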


As an immediate application, we show that all neighborly configurations of $n$ vectors in $\R^r$ have the same $f$-matrix (hence the same $f^*$-matrix), and analogously for coneighborly configurations.
\begin{proposition}
\label{prop:f-neighborly-coneighborly}
Let $V,W \in \R^{r\times n}$. Suppose that $V$ and $W$ are both coneighborly, or that both are neighborly. Then $g(V\to W)$ is identically zero, hence $f(V)=f(W)$, and $f^*(V)=f^*(W)$.
\end{proposition}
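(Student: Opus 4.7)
The plan is to prove $g(V \to W) \equiv 0$ under either hypothesis; then $f(V) = f(W)$ follows from Theorem~\ref{thm:f-g} (equation~\eqref{eq:f-poly-g-poly}) and $f^*(V) = f^*(W)$ follows from Theorem~\ref{thm:f*-g*} (or Theorem~\ref{thm:f-fstar}). By Corollary~\ref{cor:neighborly-alternative} together with Corollary~\ref{cor:g-pair-dual}, the neighborly case reduces to the coneighborly case applied to the Gale duals $V^*, W^*$, so it suffices to treat the coneighborly case.

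Assume $V,W$ are both coneighborly, and set $\Delta f_{s,t} := f_{s,t}(W) - f_{s,t}(V)$, which vanishes for $0 \le t \le \lfloor \frac{n-r-1}{2} \rfloor$ by the coneighborliness hypothesis applied to each of $V$ and $W$. I would apply Lemma~\ref{lem:g-f} (with $f = f_W - f_V$ and $g = g(V \to W)$) to recover the columns of the $g$-matrix inductively. The base case $t=0$ reads $\sum_j g_{j,0}\,x^{r-j} = \sum_s \Delta f_{s,0}(x-1)^s = 0$, so $g_{j,0} = 0$ for all $j$. Inductively, for $t \le \lfloor \frac{n-r-1}{2} \rfloor$, both the $\Delta f$ term and the correction sum $\sum_j \sum_{k<t} g_{j,k} \binom{j}{t-k} x^{r-j}$ on the right-hand side of Lemma~\ref{lem:g-f} vanish, forcing $g_{j,t}(V \to W) = 0$ for all $j$.

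To finish, I would invoke the skew-symmetry $g_{j,k} = -g_{j,n-r-k}$ from \eqref{eq:g-skew} to propagate this vanishing to the range $k \ge \lceil \frac{n-r+1}{2} \rceil$. When $n-r$ is odd, the two ranges already cover every column; when $n-r$ is even, the single remaining column $k = (n-r)/2$ is self-paired under $k \mapsto n-r-k$, so the same skew-symmetry yields $g_{j,(n-r)/2} = -g_{j,(n-r)/2} = 0$. The trivial edge case $n = r$ is handled by antisymmetry directly, since then the $g$-matrix has the single column $k=0=n-r-k$. The main obstacle, if it deserves the name, is simply the bookkeeping that verifies the induction range combined with the skew-symmetry covers every entry of the $g$-matrix, with particular attention to the self-paired middle column when $n-r$ is even.
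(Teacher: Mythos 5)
Your proposal is correct and follows essentially the same route as the paper's proof: coneighborliness makes $f_{s,t}(W)-f_{s,t}(V)$ vanish for $t\leq\lfloor\frac{n-r-1}{2}\rfloor$, Lemma~\ref{lem:g-f} then forces $g_{j,k}(V\to W)=0$ for those $k$, the skew-symmetry \eqref{eq:g-skew} propagates the vanishing to all columns (including the self-paired middle column), and the neighborly case follows by Gale duality via Corollaries~\ref{cor:neighborly-alternative} and \ref{cor:g-pair-dual}. Your write-up merely makes explicit the inductive column-by-column argument and the middle-column bookkeeping that the paper leaves implicit.
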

\begin{proof} 
If $V,W \in \R^{r\times n}$ are coneighborly configurations then $f_{s,t}(V)=f_{s,t}(W)=0$ for $t \leq \floor{\frac{n-r-1}{2}}$. By Lemma~\ref{lem:g-f}, this implies that $g_{j,k}(V\to W)=0$ for $k\leq  \floor{\frac{n-r-1}{2}}$ and $0\leq j\leq r$. The skew-symmetry of the $g$-matrix then implies  $g_{j,k}(V\to W)=0$ for all $j,k$. Thus, $f(V)=f(W)$ and $f^*(V)=f^*(W)$, by Thms.~\ref{thm:f-g} and \ref{thm:f*-g*}. The remaining statements follow by Gale duality.
\end{proof}

\subsection{Contractions and Deletions}
\label{sec:deletion-contraction}
Let $V=\{v_1,\dots,v_n\} \subset \R^r$ be a vector configuration in general position. For $1\leq i\leq n$, consider the linear hyperplane $v_i^\bot \cong \R^{r-1}$ orthogonal to $v_i$, and let $V/v_i$ denote the vector configuration obtained by projecting the remaining vectors $v_j$, $j\neq i$, orthogonally onto $v_i^\bot $. Thus, $V/v_i$ is a configuration of $(n-1)$ vectors in rank $r-1$. In terms of the polar dual arrangements, $\arr(V/i)$ is the intersection of the arrangement $\arr(V)$ with the great $(d-1)$-sphere $H_i\cong S^{d-1}$ defined by $v_i$. We call $V/v_i$ a \emph{contraction}.

Moreover, we call the configuration $V\setminus v_i$ of $n-1$ vectors in $\R^r$ obtained by removing $v_i$ a \emph{deletion}. Deletions and contractions are Gale dual to each other, i.e., $(V/v_i)^*=V^*\! \setminus \!v_i^*$.

Every generic continuous motion $V\to W$ between two vector configurations $V,W \in \R^{r\times n}$ in general position induces continuous motions $V/v_i \to W/w_i$ and $V\setminus v_i \to W\setminus w_i$, $1\leq i \leq n$.

\begin{lemma}[Contractions and Deletions]
\label{lem:g-contraction}
For $0\leq k\leq n-r$ and $0\leq j \leq r-1$,
\[
\sum_{i=1}^n g_{j,k}(V/v_i \to W/w_i)=(r-j)g_{j,k}(V\to W) + (j+1)g_{j+1,k}(V\to W)
\]
Analogously, for $0\leq k\leq n-r-1$ and $0\leq j\leq r$,
\[
\sum_{i=1}^n g_{j,k}(V \setminus  v_i \to W \setminus w_i)=(n-r-k)g_{j,k}(V\to W) + (k+1)g_{j,k+1}(V\to W)
\]
\end{lemma}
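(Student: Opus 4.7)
The plan is to reduce both identities to first-order differential relations among the $f$- and $f^*$-polynomials and then to extract the $g$-matrix identities by matching coefficients in linearly independent families of monomials.

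For the contraction identity, I would start from the observation that faces of $\arr(V/v_i)$ correspond bijectively to faces $F\in \mathcal{F}(V)$ with $F_i=0$ (truncating the $i$-th coordinate decreases $|F_0|$ by one and preserves $|F_-|$). Summing over $i$, each $F\in \mathcal{F}(V)$ is counted exactly $|F_0|$ times, giving
\[
\sum_{i=1}^n f_{V/v_i}(x,y)=\frac{\partial f_V}{\partial x}(x,y).
\]
Applying this identity to both $V$ and $W$, subtracting, and substituting the expansion of $f_W-f_V$ from Theorem~\ref{thm:f-g}, the Leibniz rule gives
\[
\frac{\partial}{\partial x}\bigl[(x+y)^j(1+x)^{r-j}y^k\bigr]=j\,(x+y)^{j-1}(1+x)^{(r-1)-(j-1)}y^k+(r-j)(x+y)^j(1+x)^{(r-1)-j}y^k.
\]
Both summands are basis monomials of the $g$-expansion for rank $r-1$ on $n-1$ vectors. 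Matching coefficients of $(x+y)^{j'}(1+x)^{(r-1)-j'}y^{k'}$ on both sides produces the stated identity, with the first summand contributing $(j+1)g_{j+1,k}(V\to W)$ and the second contributing $(r-j)g_{j,k}(V\to W)$.

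For the deletion identity, I would exploit Gale duality. Since $(V\setminus v_i)^{*}=V^{*}/v_i^{*}$ and $f^*_U=f_{U^{*}}$, the contraction identity applied to $V^{*}$ yields $\sum_{i=1}^n f^*_{V\setminus v_i}(x,y)=\partial f^*_V/\partial x$. I then use the expansion of $f^*_W-f^*_V$ from Theorem~\ref{thm:f*-g*} together with
\[
\frac{\partial}{\partial x}\bigl[(x+y)^k(1+x)^{n-r-k}y^j\bigr]=k\,(x+y)^{k-1}(1+x)^{(n-1)-r-(k-1)}y^j+(n-r-k)(x+y)^k(1+x)^{(n-1)-r-k}y^j,
\]
whose summands are basis monomials of the corresponding $f^*$-expansion for rank $r$ on $n-1$ vectors. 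The same coefficient-matching argument yields the deletion identity, with the two summands contributing $(k+1)g_{j,k+1}(V\to W)$ and $(n-r-k)g_{j,k}(V\to W)$.

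The only nontrivial step is to justify coefficient matching. For the contraction side this is the injectivity of $T$ recorded in Remark~\ref{rem:g-f+DS-g} (established by the inductive inversion in Lemma~\ref{lem:g-f}); for the deletion side the analogous injectivity follows by Gale duality (equivalently, from the algebraic independence of $x+y$ and $1+x$ over $\R[y]$). With these in place, both identities reduce to a single Leibniz-style computation.
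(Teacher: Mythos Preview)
Your argument is correct, and it takes a genuinely different route from the paper's.

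The paper proves the contraction identity by a direct geometric argument at the level of individual mutations: if $V\to W$ is a single mutation of type $(j',k')$ involving the $r$-tuple indexed by $R$ (with $J\subset R$ the $j'$ indices on the negative side), then in the induced motion $V/v_i\to W/w_i$ one sees a $(j'-1,k')$-mutation when $i\in J$, a $(j',k')$-mutation when $i\in R\setminus J$, and no mutation when $i\notin R$. Summing these contributions over~$i$ and over all mutations in a sequence from $V$ to $W$ gives the formula directly from Definition~\ref{def:g-pair}. The deletion formula is then deduced by Gale duality via Corollary~\ref{cor:g-pair-dual}.

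Your approach instead packages everything into the polynomial identities already established in the paper: the key observation $\sum_i f_{V/v_i}=\partial_x f_V$, combined with the expansion \eqref{eq:f-poly-g-poly} and the Leibniz rule, reduces the contraction identity to matching coefficients in the basis $\{(x+y)^j(1+x)^{(r-1)-j}y^k\}$, whose linear independence is exactly what Lemma~\ref{lem:g-f} provides. For deletions you run the same computation through \eqref{eq:f*-g*}. This is elegant and shows that the contraction/deletion formulas are \emph{formal} consequences of the $f$--$g$ and $f^*$--$g$ relations, with no further geometric input needed beyond the easy face-counting identity $\sum_i f_{V/v_i}=\partial_x f_V$. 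The paper's argument, by contrast, is more self-contained (it works straight from the definition of the $g$-matrix via mutations, without invoking the injectivity of~$T$) and gives a clearer combinatorial picture of \emph{why} the coefficients $(r-j)$ and $(j+1)$ appear.
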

\begin{proof} We prove the formula for contractions; the result for deletions follows by Gale duality.
Consider the corresponding motion of arrangements in $S^d$. Consider a $(j,k)$-mutation in the full arrangement $\arr(V)$, involving $r=d+1$ great $(d-1)$-spheres $H_i$, $i\in R$, for some $R\in \binom{[n]}{r}$, that pass through a common point during the mutation, and that bound a small simplex before and after the mutation. Let $J\in \binom{R}{j}$ be the set of indices such that the small simplex after the mutation is contained in $H_i^-$ for $i\in J$ and in $H_i^+$ for $i\in R\setminus J$. For each $i\in R$, we see a mutation in the arrangement restricted to $H_i\cong S^{d-1}$; this mutation in $S^{d-1}$ is of type $(j-1,k)$ if $i\in J$ and of type $(j,k)$ if $i\in R\setminus J$. For $i\in [n]\setminus R$, the restriction to $H_i$ does not undergo a mutation.
\end{proof}

We say that a vector configuration $V\subset \R^r$ is \emph{$j$-neighborly} if every subset of $V$ of size $j$ is extremal. We say that $V$ is \emph{$k$-coneighborly} if $f_{s,t}=0$ for $t\leq k$, i.e., if every open linear halfspace contains at least $k+1$ vectors from $V$.
\begin{lemma}\label{gjk-nbly} Let $0\leq j\leq \frac{r-1}{2}$, $0\leq k\leq \frac{n-r-1}{2}$, and let $V,W \in \R^{r\times n}$ be vector configurations such that $V$ is $k$-coneighborly and $W$ is $j$-neighborly. Then 
\begin{equation}
\label{eq:gjk-nbly}
\textstyle g_{j,k}(V\to W)=\binom{n-k-r+j}{j}\binom{k+r-1-j}{k} - \binom{n-k-r+j-1}{j-1} \binom{k+r-j}{k} > 0
\end{equation}
\end{lemma}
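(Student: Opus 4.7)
My plan is to first show that, under the stated hypotheses, $g_{j,k}(V\to W)$ is independent of the particular choice of $V$ and $W$, and then to evaluate it for one convenient pair. The invariance rests on the inversion provided by Lemma~\ref{lem:g-f}: for every fixed $t$, the entries $(g_{i,t}(V\to W))_{0\leq i\leq r}$ are determined inductively by $f_{s,\ell}(W)-f_{s,\ell}(V)$ for $\ell\leq t$. If $V_1,V_2$ are both $k$-coneighborly, all such differences vanish for $\ell\leq k$, so $g_{i,\ell}(V_1\to V_2)=0$ for $0\leq i\leq r$ and $0\leq \ell\leq k$. Applying the same argument to the Gale duals $V_1^{\ast},V_2^{\ast}$ (both $j$-coneighborly in $\R^{n-r}$) and using Corollary~\ref{cor:g-pair-dual} shows $g_{\ell,i}(W_1\to W_2)=0$ for $0\leq \ell\leq j$ and all $i$ whenever $W_1,W_2$ are both $j$-neighborly. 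Since the $g$-matrix is additive along sequences of mutations (this is built into Definition~\ref{def:g-pair}, with consistency guaranteed by Lemma~\ref{lem:g-f}), we conclude
\[
g_{j,k}(V\to W)=g_{j,k}(V\to V_0)+g_{j,k}(V_0\to W_0)+g_{j,k}(W_0\to W)=g_{j,k}(V_0\to W_0)
\]
for any coneighborly $V_0$ and any neighborly $W_0$.

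It therefore suffices to compute $g_{j,k}(V_0\to W_0)$ for a single convenient pair, and I would take $V_0=\cocyclic(n,r)$ and $W_0=\cyclic(n,r)$ from Example~\ref{ex:cyclic-cocyclic}. Because $f(V_0)$ vanishes at all levels $\ell\leq \frac{n-r-1}{2}$, Lemma~\ref{lem:g-f} expresses each $g_{j,t}(V_0\to W_0)$ with $t\leq k$ as an explicit linear combination of the cyclic face numbers $f_{s,\ell}(\cyclic(n,r))$ at levels $\ell\leq t$. Plugging in the known closed form for these numbers (see Andrzejak and Welzl~\cite{Andrzejak:2003aa}) and expanding in the basis $\{(1+x)^{r-i}\}_{0\leq i\leq r}$ reads off $g_{j,k}(V_0\to W_0)$, which should simplify to the claimed two-term binomial difference via standard binomial identities.

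For the positivity assertion I would factor the formula as
\[
g_{j,k}=\binom{n-k-r+j-1}{j-1}\binom{k+r-1-j}{k}\cdot\frac{r(n-r-k)-j(n-r)}{j(r-j)}\qquad (j\geq 1),
\]
with the boundary case $j=0$ giving $g_{0,k}=\binom{k+r-1}{k}>0$ directly. The binomial factors are manifestly positive in the stated range. The bracketed expression is positive iff $\frac{j}{r}+\frac{k}{n-r}<1$, and under the hypotheses $j\leq \frac{r-1}{2}$ and $k\leq \frac{n-r-1}{2}$ one obtains $\frac{j}{r}+\frac{k}{n-r}\leq 1-\frac{1}{2}\bigl(\frac{1}{r}+\frac{1}{n-r}\bigr)<1$, so $g_{j,k}>0$ as required.

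The main obstacle is the bookkeeping in the second step: channelling the explicit $f$-vector of $\cyclic(n,r)$ through the triangular recursion of Lemma~\ref{lem:g-f} and distilling the result into the stated two-term binomial expression. A cleaner route may be to work with $f^*_{V_0}$ instead (which vanishes at low $|F_-|$, since $V_0^{\ast}$ is neighborly) and invert Theorem~\ref{thm:f*-g*} directly, or to pass through a suitable $h$-analog in which the recursion diagonalizes. Either way, the lemma ultimately reduces to a finite and mechanically verifiable identity between binomial coefficients.
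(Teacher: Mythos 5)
Your reduction step is sound and even a little sharper than what the paper states explicitly: from Lemma~\ref{lem:g-f} you correctly get that $g_{i,\ell}(V_1\to V_2)=0$ for $\ell\leq k$ whenever $V_1,V_2$ are both $k$-coneighborly, and the Gale-dual argument via Corollary~\ref{cor:g-pair-dual} (using that $j$-neighborliness of $W$ is equivalent to $j$-coneighborliness of $W^*$, by Lemma~\ref{lem:char-extremal}) kills the contribution of the $j$-neighborly end as well; together with additivity of $g$ along concatenated mutation paths this legitimately reduces the lemma to the single pair $(\cocyclic(n,r),\cyclic(n,r))$. Your positivity argument is also fine: your factorization with the factor $\bigl(r(n-r-k)-j(n-r)\bigr)/\bigl(j(r-j)\bigr)$ is algebraically the same as the paper's $\frac{n-k-r+j}{j}-\frac{k+r-j}{r-j}$, and the bound $\frac{j}{r}+\frac{k}{n-r}<1$ is correct in the stated range.

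The genuine gap is the middle step, which is the actual content of the lemma: you never derive the closed form $\binom{n-k-r+j}{j}\binom{k+r-1-j}{k}-\binom{n-k-r+j-1}{j-1}\binom{k+r-j}{k}$. Saying that one should feed the Andrzejak--Welzl formula for $f(\cyclic(n,r))$ into the triangular recursion of Lemma~\ref{lem:g-f} and that it ``should simplify via standard binomial identities'' is a plan, not a proof; the recursion is inductive in $t$ and mixes all $j$'s at each stage, so the required simplification is a nontrivial multi-parameter binomial identity that you have not exhibited (your own closing remarks about possibly switching to $f^*$ or an $h$-analog confirm the computation was not carried out). The paper avoids this entirely: it proves the formula by a double induction on $j$ and $k$, using the deletion and contraction formulas of Lemma~\ref{lem:g-contraction} applied directly to an arbitrary $k$-coneighborly $V$ and $j$-neighborly $W$ (whose minors stay coneighborly/neighborly of the appropriate orders), with base case $g_{0,0}(V\to W)=f_{0,0}(W)-f_{0,0}(V)=1$ and an intermediate closed form for the column sums $g_{\leq j,k}$. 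So your route is viable in principle but incomplete where it matters; to finish it you would either have to push through the explicit cyclic-$f$-matrix computation, or replace that step by an induction in the spirit of Lemma~\ref{lem:g-contraction} --- at which point the detour through the cyclic/cocyclic pair is no longer needed.
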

The proof, which we defer to Appendix~\ref{sec:proof-gjk-nbly} is by a double induction, using the deletion and contraction formulas from Lemma~\ref{lem:g-contraction}.

\section{The Spaces $\gspace_{n,r}$ and $\gspace_{n,r}^0$ Spanned by $g$-Matrices}
\label{sec:g-space}

In this section, we prove Theorems~\ref{thm:f-space-g-space}, \ref{thm:f-space-g-space-pointed}, and \ref{thm:fstar-space-gstar-space-pointed}. By Theorems~\ref{thm:f-g} and \ref{thm:f*-g*} and Lemma~\ref{lem:g-f}, the description of the spaces $\fspace_{n,r}$, $\fspace^0_{n,r}$, and $(\fspace^*_{n,r})^0$ follows from the description of the spaces $\gspace_{n,r}$ and $\gspace_{n,r}^0$, so it remains to prove the latter. 

Recall that $\mathcal{V}_{n,r}$ is the set of all vector configurations $V\in \R^{r\times n}$ in general position, and $\mathcal{V}_{n,r}^0$ the subset of pointed configurations.

By Theorem~\ref{thm:f-g}, the $g$-matrix $g=g(V\to W)$ of any pair $V,W\in \mathcal{V}_{n,r}$ satisfies the skew-symmetries $g_{j,k}=-g_{r-j,k}=-g_{j,n-r-k}=g_{r-j,n-r-k}$ in \eqref{eq:g-skew}. Thus, in order to prove Theorem~\ref{thm:f-space-g-space}, it remains to show that $\gspace_{n,r}=\lin\{g(V\to W)\colon V,W\in \mathcal{V}_{n,r}\}$ has dimension $\floor{\frac{r+1}{2}} \floor{\frac{n-r+1}{2}}$. To see this, consider a generic continuous deformation from a coneighborly configuration $V_0$ to a neighborly configuration $V_N$, and let $V_t$, $0\leq i \leq N$,  be the intermediate vector configurations, i.e., $V_t$ and $V_{t-1}$ differ by a mutation. Thus, the $g$-matrices $g(V_0\to V_t)$ and $g(V_0\to V_{t-1})$ differ by the $g$-matrix of a mutation, i.e., their first quadrants (small $g$-matrices) differ in at most one coordinate, by $+1$ or $-1$. Moreover, $g(V_0\to V_0)$ is identically zero, and every entry of the first quadrant of $g(V_0\to V_N)$ is strictly positive by Lemma~\ref{gjk-nbly}. Thus, the proof of Theorem~\ref{thm:f-space-g-space} is completed by the following lemma:

\begin{lemma} 
\label{lem:basis1}
Let $X_0,X_1,\dots,X_N$ be vectors in $\R^m$ such that
\label{lem:basis}
\begin{enumerate}
\item $X_0=0$;
\item $X_t$ and $X_{t-1}$ differ in exactly one coordinate, by $+1$ or $-1$;
\item for every $1 \leq i \leq m$, there exists $t$ such that $X_t$ and $X_{t-1}$ differ in the $i^{\text{th}}$ coordinate
(e.g., this holds if all coordinates of $X_N$ are non-zero, by Conditions~1 and 2).
\end{enumerate}
Then there is a subset $X_{t_1}, \dots, X_{t_m}$ of vectors that form a basis of $\R^m$.
\end{lemma}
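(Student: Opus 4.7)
The plan is to exhibit, for each coordinate $i\in[m]$, a specific index $t_i$ such that the vectors $X_{t_1},\dots,X_{t_m}$ form a triangular system after a suitable reordering of coordinates, and hence are linearly independent.

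First, for each coordinate $i\in[m]$, I would set
\[
t_i := \min\{t\geq 1 \colon (X_t)_i \neq (X_{t-1})_i\},
\]
which is well defined by Condition~3. Since by Condition~2 each step changes exactly one coordinate, the indices $t_1,\dots,t_m$ are pairwise distinct. Let $\pi$ be the unique permutation of $[m]$ for which $t_{\pi(1)}<t_{\pi(2)}<\cdots<t_{\pi(m)}$.

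Next, I would observe the following key property of $X_{t_i}$: by the minimality of $t_j$ and Condition~1, the $j$-th coordinate of $X_t$ is still $0$ for every $t<t_j$; in particular, the coordinate $(X_{t_i})_j$ equals $0$ whenever $t_j > t_i$. On the other hand, the $i$-th coordinate itself has just changed from $0$ to $\pm 1$ at step $t_i$, so $(X_{t_i})_i = \pm 1$. Applied to $i=\pi(k)$, this says that $X_{t_{\pi(k)}}$ has $\pi(k)$-th coordinate $\pm 1$ and vanishes in the coordinates $\pi(k+1),\dots,\pi(m)$.

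Finally, I would consider the $m\times m$ matrix $M$ whose $k$-th column is the vector $X_{t_{\pi(k)}}$, with rows reordered according to $\pi$ (i.e., the $k$-th row corresponds to coordinate $\pi(k)$). The observation above shows that $M$ is lower triangular with diagonal entries in $\{+1,-1\}$, so $\det M = \pm 1 \neq 0$ and the vectors $X_{t_1},\dots,X_{t_m}$ form a basis of $\R^m$. There is no real obstacle here: the whole argument reduces to bookkeeping the ``first touch'' times $t_i$ and recognising the resulting triangular structure.
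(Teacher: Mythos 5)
Your proposal is correct and follows essentially the same argument as the paper: both pick, for each coordinate $i$, the first index $t_i$ at which that coordinate becomes non-zero (your ``first change'' time coincides with this, since $X_0=0$), note that the $t_i$ are distinct by Condition~2, order them, and conclude linear independence from the resulting triangular structure. Your explicit determinant computation is just a slightly more detailed phrasing of the paper's observation that each $X_{t_i}$ is independent of the earlier ones because those have $i$-th coordinate zero.
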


\begin{proof}
For $1 \leq i\leq m$, let $t_i$ be the smallest $t\in \{1,\dots,N\}$ such that the $i$-th coordinate of $X_{t_i}$ is non-zero; the index $t_i$ exists by Properties~1 and 3. Moreover, by Property~2, no two coordinates can become non-zero at the same time, i.e., the indices $t_i$ are pairwise distinct. Up to re-labeling the coordinates, we may assume $t_1 < t_2 < \ldots < t_m$. Then, for $1\leq i \leq m$, the vector $X_{t_i}$ is linearly independent from the vectors $X_{t_1},\dots X_{t_{i-1}}$, since all of the latter vectors have $i$-th coordinate zero. Thus, the $X_{t_i}$ form a basis.
\end{proof}

In order to prove prove Theorems~\ref{thm:f-space-g-space-pointed} and \ref{thm:fstar-space-gstar-space-pointed}, we need to prove the description \eqref{eq:g-space} of the space $\gspace_{n,r}^0=\lin\{g(V\to W)\colon V,W \in \mathcal{V}^0_{n,r}\}$. We start by observing the following:
\begin{lemma}
If $V,W \in \mathcal{V}^0_{n,r}$ then $g_{0,k}(V \to W)=0$ for all $k$.
\end{lemma}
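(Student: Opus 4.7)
The plan is to exploit the Gale-dual identity (Theorem~\ref{thm:f*-g*}) together with the characterization of pointedness through the vanishing of $f^*_V(x,0)$.

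First, I would observe that for any pointed $V$, the polynomial $f^*_V(x,0)$ is identically zero. From Definition~\ref{def:fstar-poly}, $f^*_V(x,0) = \sum_{F \in \mathcal{F}^*(V),\, F_- = \emptyset} x^{|F_0|}$, and any such $F$ encodes a nontrivial linear dependence $\sum_i \lambda_i v_i = 0$ with all $\lambda_i \geq 0$. Pairing with a linear functional $u$ witnessing pointedness (so $\langle u, v_i\rangle > 0$ for every $i$) yields $0 = \sum_i \lambda_i \langle u, v_i\rangle > 0$, a contradiction---the same Farkas-type argument already used in the proof of Lemma~\ref{lem:Farkas}.

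Next, I would specialize Theorem~\ref{thm:f*-g*} at $y = 0$: on the right-hand side, only the $j=0$ terms survive, so
\[
f^*_W(x,0) - f^*_V(x,0) \;=\; \sum_{k=0}^{n-r} g_{0,k}(W\to V)\, x^k(1+x)^{n-r-k}.
\]
When $V$ and $W$ are both pointed, the left-hand side vanishes by the first step, leaving the polynomial identity $\sum_{k} g_{0,k}(W\to V)\, x^k(1+x)^{n-r-k} = 0$ in $\R[x]$.

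To finish, I would check that the $n-r+1$ polynomials $p_k(x) := x^k(1+x)^{n-r-k}$, $0 \leq k \leq n-r$, are linearly independent. The slickest route is the invertible substitution $x = u/(1-u)$, under which $p_k(x) = u^k/(1-u)^{n-r}$; independence of the monomials $u^k$ immediately gives independence of the $p_k$. Hence $g_{0,k}(W\to V) = 0$ for every $k$, and the antisymmetry $g(V\to W) = -g(W\to V)$ from Theorem~\ref{thm:f-g} completes the argument. There is no real obstacle here: the lemma is essentially the $y=0$ shadow of Theorem~\ref{thm:f*-g*}, combined with the pointedness-via-$f^*$ characterization.
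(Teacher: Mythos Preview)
Your proof is correct, but it takes a genuinely different route from the paper's. The paper argues directly from the geometric definition of the $g$-matrix via continuous motion (Definition~\ref{def:g-pair}): after rotating so that $V$ and $W$ lie in a common open halfspace $H^+$, one can deform $V$ to $W$ through configurations $V(t)\subset H^+$; since $0\notin\conv(V(t))$ throughout, no $r$-tuple ever becomes linearly dependent with all coefficients of the same sign, so no mutation of type $(0,k)$ occurs and hence $g_{0,k}(V\to W)=0$ by definition. Your argument is instead purely algebraic: you invoke Theorem~\ref{thm:f*-g*}, specialize at $y=0$, and use that pointedness forces $f^*_V(x,0)=0$ together with the linear independence of $\{x^k(1+x)^{n-r-k}\}$. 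The paper's approach is more self-contained (it needs only the mutation definition, not the $f$--$f^*$ machinery of Theorem~\ref{thm:f-fstar}), while yours nicely exhibits the lemma as the $y=0$ shadow of Theorem~\ref{thm:f*-g*} and avoids any appeal to the geometry of the deformation.
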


\begin{proof}
Up to a rotation, we may assume that both $V$ and $W$ are both contained in the same open halfspace $H^+ = \{ x \in \R^r \mid \langle u ,x \rangle > 0 \}$, for some $u \in S^d$. Then $V$ can be deformed into $W$ through a continuous family $V(t)$ of vector configurations such that $V(t) \subset H^+$ and hence $0 \notin \conv (V(t))$ for all $t$. Thus, there are no mutations of types $(0,k)$ for any $k$.
\end{proof}

Thus, in order to prove \eqref{eq:g-space}, it remains to show that the space $\gspace^0_{n,r}$ has dimension $\floor{\frac{r-1}{2}}\floor{\frac{n-r+1}{2}}$. 
To this end, by Lemma~\ref{lem:basis1}, it is enough to prove the following:
\begin{lemma}
\label{lem:all-mutations}
For every $n$ and $r$, there is a sequence $V_0, V_1, \ldots V_N$ of configurations in $\mathcal{V}^0_{n,r}$ with the following properties:
\begin{enumerate}
\item $V_t$ and $V_{t+1}$ differ by a single mutation,
\item For $1 \leq j \leq \floor{\frac{r-1}{2}}$, $0 \leq k \leq \floor{\frac{n-r-1}{2}}$, some mutation $V_{t-1} \to V_t$ is of Type $(j,k)$.
\end{enumerate}
\end{lemma}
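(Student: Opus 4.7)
My plan is to build the sequence in two layers: first, for each target pair $(j, k)$ in the required range, construct an explicit local mutation of type $(j, k)$ between two pointed configurations; second, link these local pieces together by generic straight-line motions through the space of pointed configurations.

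\emph{Step 1 (local mutations).} Fix the open halfspace $H^+ := \{x \in \R^r : x_1 > 0\}$. For each $(j,k)$ with $1 \leq j \leq \floor{(r-1)/2}$ and $0 \leq k \leq \floor{(n-r-1)/2}$, I construct a pair $U, U' \in \mathcal{V}^0_{n,r}$ with all vectors in $H^+$ that differ by a single mutation of type $(j, k)$. The construction places the $r$ mutating vectors indexed by an $r$-tuple $R$ as a tight cluster near $e_1$, in near-degenerate position, so that a small one-parameter perturbation of those $r$ vectors (keeping the remaining $n-r$ fixed) crosses a single linear dependence and produces the mutation. The signature of the emerging simplex $\tau\subset S^d$ on the coordinates in $R$, which determines $j$, is controlled by the sign pattern of the perturbation; the signature on the complementary coordinates, which determines $k$, is controlled by placing $k$ of the remaining $n - r$ vectors on one side of the local mutation region and $n - r - k$ on the other side. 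All vectors lie in $H^+$, so both $U$ and $U'$ are pointed.

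\emph{Step 2 (connections).} Up to a global rotation, any two pointed configurations can be placed in a common halfspace $H^+$, and the linear interpolation $(1 - s)U + s U'$ remains in $H^+$ (hence pointed) for all $s \in [0, 1]$ by convexity of $H^+$. After a small generic perturbation this family is in general position except at finitely many instants, each a single mutation between pointed configurations. Concatenating the local pairs from Step 1 in any order, linked by such interpolations, yields the desired sequence $V_0, V_1, \ldots, V_N$.

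\emph{Main obstacle.} The delicate part is the combinatorial verification in Step 1: choosing the cluster geometry and perturbation direction so that $\tau$ simultaneously realizes the prescribed values of $j$ and $k$ while all vectors remain in $H^+$. The restriction $j \geq 1$ is essential and aligns with the preceding lemma, which rules out $(0, k)$-mutations along any motion that stays in a common halfspace. Together with the equivalences $(j, k) \equiv (r - j, n - r - k)$ of mutation types and the fact that mutations with $2j = r$ or $2k = n - r$ are combinatorially trivial, the target range $1 \leq j \leq \floor{(r-1)/2}$, $0 \leq k \leq \floor{(n-r-1)/2}$ exhausts all non-degenerate mutation types compatible with pointedness, so a local model is expected to exist for every such pair.
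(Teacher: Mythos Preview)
Your two-layer strategy is sound in outline and genuinely different from the paper's: you build one local mutation per target type $(j,k)$ and glue them by straight-line homotopies inside a common open halfspace, whereas the paper runs a single integrated motion in the affine model $\R^d$, sliding one extra point $p$ along a family of lines $\ell_\sigma$ (one per non-empty $\sigma\subseteq[d]$) while the other $n-1$ points stay fixed, with $d-1$ of them spread out and the rest clustered in a small ball. Your Step~2 is fine and in fact slightly cleaner than what the paper does between its ``stages''.

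The genuine gap is Step~1: you correctly flag the combinatorial verification as the main obstacle and then do not carry it out, concluding only that ``a local model is expected to exist''. Saying that the ``sign pattern of the perturbation'' controls $j$ and that the placement of the remaining $n-r$ vectors controls $k$ is not yet an argument; in your spherical picture with the $r$ mutating vectors clustered near $e_1$, the emerging simplex $\tau$ sits near a point $u$ close to $e_1^\perp$ whose location depends on the perturbation, and neither the value $j=|\{i\in R:\tau\subset H_i^-\}|$ nor the meaning of ``one side of the local mutation region'' is determined until you pin down $u$ and the local geometry. The paper supplies exactly this missing engine via the affine picture and Observation~\ref{obs:point-mutation}: when a point $p$ crosses the hyperplane $\Pi=\aff(Q)$ through the open region $R_\sigma(Q)$, the resulting mutation has type $(j,k)$ with $j=d+1-|\sigma|$ and $k$ equal to the number of remaining stationary points on the side $p$ enters. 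Once you have that observation (or an equivalent explicit computation in your cluster model), either your modular construction or the paper's integrated sweep---which has the bonus of producing all values of $k$ for a fixed $j$ in one pass along $\ell_\sigma$---completes the proof.
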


Consider a set $S = \{p_1,\dots,p_n \}\subset \R^d$ in general position, corresponding to a pointed vector configuration $V = \{v_1,\dots,v_n\} \subset \R^{d+1}$, where $v_i=(1, p_i)$. 
In order to prove Lemma~\ref{lem:all-mutations}, we will construct a continuous deformation of the point set $S$ in $\R^d$ such that the corresponding continuous deformation of $V$ in $\R^{d+1}$ contains mutations of all types $(j,k)$, for  $1 \leq j \leq \floor{\frac{d}{2}}$ and $0 \leq k \leq \floor{\frac{n-d-2}{2}}$. 

Consider a set $Q=\{q_1,\dots,q_d\}\subset \R^d$ of $d$ points in general position. We fix the labeling of the points by $i\in [d]$ and call $Q$ a \emph{labeled} point set.
Let $\Pi=\aff(Q)$ be the affine hyperplane spanned by $Q$. Every point $x\in \Pi$ can be uniquely written as an affine combination $x=\sum_{i=1}^d \alpha_i q_i$, $\alpha_i \in \R$, $\sum_i \alpha_i =1$. This defines, for every non-empty subset 
$\emptyset \neq \sigma \subset [d]$, a region  $R_\sigma(Q)=\{\sum_{i=1}^d \alpha_i q_i \colon \sum_i \alpha_i =1, \alpha_i >0 \textrm{ for } i\in \sigma, \alpha_i<0 \textrm{ for } i\in [d]\setminus \sigma\}$ such that the union of the closures 
$\bigcup_{\emptyset \neq \sigma \subset [d]} \overline{R_\sigma(Q)}$ covers all of $\Pi$.

\begin{observation}
\label{obs:point-mutation}
Let $S \subset \R^d$ be a set of $n$ points in general position, let $Q\subset S$ be a labeled subset of $d$ points, and let $p\in S\setminus Q$. Consider a continuous motion such that all points of $S\setminus \{p\}$ remain fixed and $p$ crosses the hyperplane $\Pi$ through the open region $R_{\sigma}(Q)$, from the halfspace $\Pi^-$ to the halfspace $\Pi^+$. Let $j=d-|\sigma|+1$ and $k=|\Pi^+ \cap (S\setminus (Q \sqcup \{p\}))|$. Then this corresponds to a mutation of type $(j,k)\equiv (r-j,n-r-k)$ of the corresponding pointed vector  configuration in $\R^{d+1}$.
\end{observation}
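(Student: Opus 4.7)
My plan is to trace the simplicial face $\tau$ of $\arr(W)$ appearing just after the crossing, read off its signature $Y$ from the geometry, and verify that the resulting pair $(|Y_-\cap R|, |Y_-\cap([n]\setminus R)|)$ matches the observation up to the equivalence $(j,k)\equiv(r-j,n-r-k)$ obtained by passing to the antipodal simplex $-\tau$.

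First, parameterize the motion as $p(t)=p_0+t\nu$, with $p_0\in R_\sigma(Q)$ and $\nu$ a unit normal to $\Pi$ pointing into $\Pi^+$, so that $v_p(t)=v_{p_0}+t(0,\nu)\in \R^r$. Since $p_0=\sum_{q\in Q}\lambda_q q$ in affine coordinates on $\Pi$, one obtains the linear relation $v_{p_0}=\sum_{q\in Q}\lambda_q v_q$ with $\lambda_q>0$ for $q\in\sigma$ and $\lambda_q<0$ for $q\in Q\setminus\sigma$. By general position of $S\setminus\{p_0\}$ relative to $\Pi$, no other $r$-tuple of vectors degenerates, so this produces a single mutation with mutation set $R:=Q\sqcup\{p\}$. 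At $t=0$ the $r$ great spheres $H_i$, $i\in R$, meet at an antipodal pair $\pm u$, where $u\propto(-\langle\nu,q_1\rangle,\nu)\in S^d$ encodes $\Pi$ oriented so that $\Pi^+$ is the positive side; in particular $\langle v_p(t),u\rangle>0$ for $t>0$, so $u\in H_p^+$ in $\arr(W)$, and for every $i\in[n]\setminus R$ one has $\sgn\langle v_i,u\rangle=+1$ iff $p_i\in\Pi^+$.

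The heart of the proof is to compute the signature $Y$ of the simplex $\tau$ of $\arr(W)$ lying near $u$. The vertex of $\tau$ opposite to the facet $H_p$ is $(\bigcap_{q\in Q}H_q)\cap H_p^+=\{u\}$, so $u$ itself is a vertex of $\tau$, and $Y_p=+1$. I would then localize at $u$ via the tangent space $T_uS^d$, which by $\langle v_q,u\rangle=0$ is spanned by $\{v_q : q\in Q\}$. In this chart $\tau$ becomes a wedge with apex at the origin, bounded by the $d$ linear hyperplanes $v_q^\perp$ and capped by the affine hyperplane $\{y : \langle v_p(t),y\rangle=-\langle v_p(t),u\rangle\}$ (the local equation of $H_p$). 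Let $\{w_q\}_{q\in Q}\subset T_uS^d$ be the basis dual to $\{v_q\}$; the extreme rays of the wedge have directions $\epsilon_q w_q$ with $\epsilon_q=Y_q$. Boundedness of the wedge requires each such ray to cross the capping hyperplane at a positive parameter, which, since $-\langle v_p(t),u\rangle<0$, forces $\epsilon_q\langle v_p(t),w_q\rangle<0$, i.e., $Y_q=-\sgn\langle v_p(t),w_q\rangle$. A direct computation gives $\langle v_p(t),w_q\rangle=\lambda_q+O(t)$, so for small $t>0$ we obtain $Y_q=-\sgn(\lambda_q)$, which equals $-1$ on $\sigma$ and $+1$ on $Q\setminus\sigma$.

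Assembling: $Y$ has $|Y_-\cap R|=|\sigma|$ and $|Y_-\cap([n]\setminus R)|=|\Pi^-\cap(S\setminus R)|$, so the type of $\tau$ (near $u$) is $(|\sigma|,|\Pi^-\cap(S\setminus R)|)$; passing to the antipodal simplex $-\tau$ converts this to the equivalent representative $(r-|\sigma|,n-r-|\Pi^-\cap(S\setminus R)|)=(d-|\sigma|+1,|\Pi^+\cap(S\setminus R)|)$, which is exactly the observation's claim. The one subtle step is the wedge-boundedness sign computation identifying $Y_q$; the other ingredients are direct bookkeeping of the definitions from Section~\ref{sec:g-pair} applied to the explicit geometry of the crossing.
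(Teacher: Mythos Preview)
The paper states this as an observation without proof, leaving it to be read off directly from the definition of mutation type in Section~\ref{sec:g-pair}. Your argument is a correct and complete verification: you correctly identify the mutation set $R=Q\sqcup\{p\}$, the common vertex $u$, and then determine the signature $Y$ of the post-mutation simplex $\tau$ on $R$ via a tangent-space analysis at $u$. The key step---deducing $Y_q=-\sgn(\lambda_q)$ from the boundedness of the local wedge---is handled carefully, and the final bookkeeping converting $(|\sigma|,|\Pi^-\cap(S\setminus R)|)$ to the claimed $(d-|\sigma|+1,|\Pi^+\cap(S\setminus R)|)$ via the antipodal equivalence is correct.

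A slightly shorter route, in the spirit of oriented matroid circuits, is to note that at the degenerate moment the unique linear dependence on $\{v_i:i\in R\}$ is $v_{p_0}-\sum_{q\in Q}\lambda_q v_q=0$, whose sign vector on $R$ is $(+1$ at $p$, $-\sgn\lambda_q$ at $q)$; the signature $Y|_R$ of the post-mutation simplex on the side $H_p^+$ must agree with this circuit sign (this is the standard ``simplex on the positive side of a flip'' fact, and is exactly what your wedge computation proves). Either way, your derivation is sound.
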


\begin{proof}[Proof of Lemma~\ref{lem:all-mutations}] 
Let $A=\{a_1,\dots,a_d\} \subset \R^d$ be a labeled point set in general position. 
For $\emptyset \neq \sigma \subseteq [d]$, choose a line $\ell_\sigma$ perpendicular to the affine hyperplane $\aff(A)$ that intersects $\aff(A)$ in a point in the open region $R_\sigma(A)$. 
Choose a small $d$-dimensional ball $B_\varepsilon$ of radius $\varepsilon>0$ centered at $a_d$. Let $q_i:= a_i$ for $1\leq i\leq d-1$. If we choose $\varepsilon$ sufficiently small, then for every $q_d\in B_\varepsilon$, the labeled set 
$Q=\{q_1,\dots,q_d\}$ has the following property: For every $\emptyset \neq \sigma \subseteq [d]$, the line $\ell_\sigma$ intersects the hyperplane $\aff(Q)$ in the interior of the region $R_\sigma(Q)$.  

Let us now set $p_i:=a_i$ for $1\leq i \leq d-1$, and choose 
$n-d$ points $p_d,\dots,p_{n-1} \in B_{\varepsilon}$ such that $P:=\{p_1,\dots,p_{n-1}\} \subset \R^d$ is in general position. These points will remain fixed throughout, and we refer to them as \emph{stationary}. 

Consider an additional point $p$ that moves continuously along one of the lines $\ell_\sigma$. During this continuous motion of $S=P\sqcup\{p\}$, we say that an \emph{interesting} mutation occurs when $p$ crosses the affine hyperplane $\Pi=\aff(Q)$ spanned by some labeled $d$-element subset $Q\subset P$ of the form $Q=\{q_1,\dots,q_d\}$ with $q_i=p_i$ for $1\leq i\leq d-1$, and $q_d\in P\setminus \{p_1,\dots, p_{d-1}\} \subset B_{\varepsilon}$. By construction and by Observation~\ref{obs:point-mutation}, every interesting mutation is of type $(j,k)\equiv (r-j,n-r-k)$, where $j=d-|\sigma|+1$ is fixed and $k=|\Pi^+ \cap P\setminus Q|$, where $\Pi^+$ is the open halfspace that $p$ enters. Thus, if we continuously move  $p$ along $\ell_\sigma$ from one side of $\conv(P)$ to the other, then every value $0\leq k \leq n-1-d$ occurs at least once. We call this the \emph{$\ell_\sigma$-stage} of the motion. We perform these $\ell_\sigma$-stages consecutively, for each of the lines $\ell_\sigma$, $\emptyset \neq \sigma \subseteq [d]$, in some arbitrary order, moving $p$ from one line to the next in between these stages in some arbitrary generic continuous motion. In the process, for every $(j,k)$ with $1\leq j \leq d$ and $0\leq k \leq n-1-d$, and interesting mutation of type $(j,k)$ will occur at least once. (We have no control over other, non-interesting mutations that occur both within the stages and in-between the different stages, but this is not necessary.)
\end{proof}

\bibliography{g-matrix}

\appendix
\section{Proof of Lemma~\ref{gjk-nbly}}
\label{sec:proof-gjk-nbly}

We recall the statement that we wish to prove: Let $0\leq j\leq \frac{r-1}{2}$, $0\leq k\leq \frac{n-r-1}{2}$, and let $V,W \in \R^{r\times n}$ be vector configurations such that $V$ is $k$-coneighborly and $W$ is $j$-neighborly. Then 
\begin{equation*}
\textstyle g_{j,k}(V\to W)=\binom{n-k-r+j}{j}\binom{k+r-1-j}{k} - \binom{n-k-r+j-1}{j-1} \binom{k+r-j}{k} > 0
\end{equation*}
We prove this by double induction. Let us first consider the case $j=0$, i.e., assume that $W$ is $0$-neighborly (i.e., pointed) and $V$ is $k$-coneighborly. We wish to show that
\begin{equation}
\label{eq:g0k-nbly}
g_{0,k}(V\to W)=\binom{k+r-1}{r-1}=\binom{k+r-1}{k}
\end{equation}
We show this by induction on $k$. Consider the base case where $V$ is $0$-coneighborly and $W$ is $0$-neighborly. Then $f_{0,0}(V)=0$ and $f_{0,0}(W)=1$, hence (using  \eqref{eq:f-g-coefficients}), $g_{0,0}(V\to W)=f_{0,0}(W)-f_{0,0}(V)=1$. 
For the induction step, assume that $k\geq 1$. We use the deletion formulas from Lemma~\ref{lem:g-contraction}. Every deletion $V\setminus v_i$ is $(k-1)$-coneighborly, and every deletion $W\setminus w_i$ remains $0$-neighborly. Thus,
\[
\underbrace{\sum_i g_{0,k-1}(V\setminus v_i \to W\setminus w_i)}_{=n\binom{k-1+r-1}{k-1}}=(n-r-k+1)\underbrace{g_{0,k-1}(V\to W)}_{=\binom{k-1+r-1}{k-1}} + k g_{0,k}(V\to W)
\]
hence $g_{0,k}(V\to W)=\frac{k+r-1}{k}\binom{k-1+r-1}{k-1}=\binom{k+r-1}{k}$, as we wanted to show.

For general $j$ and $k$, we now prove by induction on $j$ that 
\begin{equation}
\label{eq:gleqjk-nbly}
g_{\leq j, k}(V\to W) = \binom{n-k-r+j}{j} \binom{k+r-1-j}{k}
\end{equation}
which implies \eqref{eq:gjk-nbly}.
The base case $j=0$ is \eqref{eq:g0k-nbly}. Thus, assume that $j\geq 1$. We will use the contraction formulas from Lemma~\ref{lem:g-contraction}. 
\begin{claim}
\[ g_{j,k}(V \to W) = \dfrac{1}{j} \Bigl(\sum_{i=1}^n g_{\leq j-1, k}(V/v_i \to W/w_i) - r \cdot g_{\leq j-1, k}(V \to W) \Bigr) \]
\end{claim}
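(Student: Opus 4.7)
The plan is to derive the claim directly by telescoping the contraction formula from Lemma~\ref{lem:g-contraction},
\[
\sum_{i=1}^n g_{j',k}(V/v_i \to W/w_i) \;=\; (r-j')\, g_{j',k}(V\to W) + (j'+1)\, g_{j'+1,k}(V\to W),
\]
over the range $j' = 0, 1, \dots, j-1$. Summing the left-hand side over this range gives, by definition of $g_{\leq j-1,k}$, exactly $\sum_{i=1}^n g_{\leq j-1,k}(V/v_i \to W/w_i)$.

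For the right-hand side I would split into two sums and reindex the second one via $j'' = j'+1$. Every intermediate index $1\leq j' \leq j-1$ then appears in both sums, with coefficients $(r-j')$ and $j'$, which add to $r$. The only surviving boundary contributions are $r \cdot g_{0,k}(V\to W)$ from $j' = 0$ in the first sum and $j \cdot g_{j,k}(V\to W)$ from $j'' = j$ in the reindexed second sum. Collecting everything yields
\[
\sum_{i=1}^n g_{\leq j-1,k}(V/v_i \to W/w_i) = r \cdot g_{\leq j-1,k}(V\to W) + j \cdot g_{j,k}(V\to W),
\]
and solving for $g_{j,k}(V\to W)$ gives precisely the claimed identity.

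There is no genuine obstacle here: the argument is a short, purely formal manipulation of the contraction recursion, and the only thing to be careful about is handling the two boundary terms (at $j'=0$ and $j''=j$) correctly when reindexing. Once this identity is established, the outer induction on $j$ for \eqref{eq:gleqjk-nbly} can invoke it by substituting the inductive formulas for $g_{\leq j-1,k}$ on both $V\to W$ and the contracted pairs $V/v_i \to W/w_i$, and simplifying the resulting binomial sum via standard Vandermonde-type identities; that latter simplification is where I would expect any real computational effort to live.
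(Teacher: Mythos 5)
Your proposal is correct and is essentially the paper's own argument: the paper likewise sums the contraction formula of Lemma~\ref{lem:g-contraction} over the indices $0,\dots,j-1$ (written there as $l-1$ for $1\leq l\leq j$), cancels the intermediate weighted terms, and is left with $r\cdot g_{\leq j-1,k}(V\to W)+j\cdot g_{j,k}(V\to W)$, from which the claim follows by solving for $g_{j,k}(V\to W)$.
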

\begin{proof}
By Lemma~\ref{lem:g-contraction},
\begin{equation}
\label{eq:contractions}
\sum_{i=0}^n g_{l-1,k}(V/v_i\to W/w_i) = (r-l+1) \cdot g_{l-1,k}(V\to W) + l \cdot g_{l,k}(V\to W)
\end{equation}
Summing up \eqref{eq:contractions} for $1 \leq l \leq j$, we get
\[ \sum_{l=1}^j \sum_{i=1}^n g_{l-1,k}(V/v_i\to W/w_i) = \sum_{l=1}^j (r-l+1) \cdot g_{l-1,k}(V\to W) + \sum_{l=1}^j l \cdot g_{l,k}(V \to W) \]
Thus,
\[ \sum_{i=1}^n g_{\leq j-1, k}(V/v_i\to W/w_i) = r \cdot g_{\leq j-1, k}(V \to W) \underbrace{ - \sum_{l=1}^j (l-1) \cdot g_{l-1,k}(V \to W)}_{-\sum_{l=0}^{j-1} l \cdot g_{l,k}(V \to W)} + \sum_{l=1}^j l \cdot g_{l,k}(V \to W)\]
hence
\begin{equation}
\label{eq:}
\sum_{i=1}^n g_{\leq j-1, k}(V/v_i\to W/w_i) = r \cdot g_{\leq j-1, k}(V \to W) + j \cdot g_{j,k}(V \to W)
\end{equation}
as we claimed.
\end{proof}
Every contraction $V/v_i$ is $(j-1)$-neighborly and every contraction $W/w_i$ remains $k$-coneighborly, hence, inductively,
\[ g_{\leq j-1, k} (V/v_i\to W/w_i) = \binom{n-k-r+j-1}{j-1} \binom{k+r-1-j}{k} \]
Moreover, also by induction $g_{\leq j-1, k}(V \to W) = \binom{n-k-r+j-1}{j-1} \binom{k+r-j}{k}$.

Substituting both into the formula from the claim, we get
\begin{multline}
g_{\leq j,k} (V \to W) = g_{\leq j-1,k} (V \to W) + g_{j,k}(V \to W) = \dfrac{n}{j} \cdot g_{\leq j-1,k} (V/v_1 \to W/w_1) - \dfrac{r-j}{j} \cdot g_{\leq j-1,k} (V \to W) = \\
= \dfrac{n}{j} \binom{n-k-r+j-1}{j-1} \binom{k+r-1-j}{k} - \frac{r-j}{j} \binom{n-k-r+j-1}{j-1} \binom{k+r-j}{k} = \\
= \dfrac{1}{j} \binom{n-k-r+j-1}{j-1} \left( n \binom{k+r-1-j}{k} - \underbrace{(r-j)\binom{k+r-j}{k}}_{\binom{k+r-j-1}{k}(k+r-j)} \right) = \\
= \underbrace{\dfrac{1}{j} \binom{n-k-r+j-1}{j-1} \Bigl( n-k-r+j \Bigr)}_{\binom{n-k-r+j}{j}}\binom{k+r-1-j}{k}
\end{multline}
which proves \eqref{eq:gleqjk-nbly}.

It remains to show that $g_{j,k}(V \to W) > 0$. For this, we rewrite \eqref{eq:gjk-nbly} as

\begin{multline}
g_{j,k}(V \to W)=\underbrace{\binom{n-k-r+j}{j}}_{\binom{n-k-r+j-1}{j-1} \frac{n-k-r+j}{j}}\binom{k+r-1-j}{k} - \binom{n-k-r+j-1}{j-1} \underbrace{\binom{k+r-j}{k}}_{\binom{k+r-1-j}{k} \frac{k+r-j}{r-j}}  \\
= \binom{n-k-r+j-1}{j-1} \binom{k+r-1-j}{k} \underbrace{ \left( \frac{n-k-r+j}{j} - \frac{k+r-j}{r-j} \right)}_{>0}
\end{multline}

To see that the expression in parentheses is positive, we notice $(n-k-r+j)(r-j) - j (k+r-j) = (n-k-r)(r-j)-jk$ and $n-k-r > k$, $r-j > j$ for the specified ranges of $j$ and $k$.
This completes the proof of Lemma~\ref{gjk-nbly}.\qed

\section{A Proof of the Dehn--Sommerville Relations for Levels}
\label{app_proof-DS}
The proof of the Dehn--Sommerville relations for levels in simple arrangements uses ideas closely related to the proof of Theorem~\ref{thm:f-fstar}. For completenes, we include the argument here.

\begin{proof}[Proof of Thm.~\ref{thm:DS-levels}]
Let $\fhom(x,y,z):=\sum_{F\in \mathcal{F}(V)} x^{|F_+|}y^{|F_-|}z^{|F_0|}=x^n f_V(\frac{z}{x},\frac{y}{x})$ be the homogeneous version of the $f$-polynomial defined in the proof of Theorem~\ref{thm:f-fstar}. 

Let $F,G \in \mathcal{F}(V)$ be the signatures of faces in $\arr(V)$. We observe that the face with signature $F$ is contained in the face with signature $G$ iff $F\leq G$.

For every $G \in \mathcal{F}(V)$, the corresponding face combinatorially a polytope, hence its Euler characteristic equals 
$1=\sum_{F\in \mathcal{F}(V), F\leq G} (-1)^{d-|F_0|}$.
Moreover, since the arrangement is simple, we get that for every $F\in \mathcal{F}(V)$,
$
\sum_{G\in \mathcal{F}(V), G\geq F} x^{|G_+|} y^{|G_-|} z^{|G_0|}=x^{|F_+|} y^{|F_-|}(x+y+z)^{|F_0|}
$.
Combining these two observations, we get
\begin{eqnarray*}
\fhom(x,y,z) & = & \sum_{G\in \mathcal{F}(V)}x^{|G_+|} y^{|G_-|} z^{|G_0|} = \sum_{G\in \mathcal{F}(V)}\left(\sum_{F\in \mathcal{F}(V), F\leq G} (-1)^{d-|F_0|}\right)x^{|G_+|} y^{|G_-|}z^{|G_0|}\\
& = & (-1)^d \sum_{F\in \mathcal{F}(V)} (-1)^{|F_0|} \left( \sum_{G\in \mathcal{F}(V), G\geq F} x^{|G_+|} y^{|G_-|}z^{|G_0|}
\right)\\
& = & (-1)^d \sum_{F\in \mathcal{F}(V)} x^{|F_+|} y^{|F_-|}(x+y+z)^{|F_0|}(-1)^{|F_0|} = (-1)^d p(x,y,-(x+y+z))
\end{eqnarray*}
Thus, by \eqref{eq:inhom-hom}, 
$
f_V(x,y)=p(1,y,x)=(-1)^d p(1,y,-(x+y+1))=(-1)^d f_V(-(x+y+1),y).
$
\end{proof}

\end{document}